\newcommand{\Q}{\mathbb Q}
\newcommand{\N}{\mathbb{N}}
\newcommand{\Z}{\mathbb{Z}}
\newcommand{\R}{\mathbb{R}}
\newcommand{\conv}{\operatorname{conv}}
\newcommand{\cl}{\operatorname{cl}}
\newcommand{\ceil}[1]{\left\lceil#1\right\rceil}
\newcommand{\cC}{\mathcal{C}}
\newcommand{\A}{\mathcal{A}}
\newcommand{\cO}{\mathcal{O}}
\newcommand{\cT}{\mathcal{T}}
\newcommand{\cK}{\mathcal{K}}
\newcommand{\cH}{\mathcal{H}}
\renewcommand{\epsilon}{\varepsilon}
\newtheoremstyle{mythmstyle}
	{\topsep}
	{\topsep}
	{\itshape}
	{}
	{\scshape}
	{.}
	{3pt}
	{}
\theoremstyle{mythmstyle}
\newtheorem{nn}{}[section]
\newtheorem{lemma}[nn]{Lemma}
\newtheorem{theorem}[nn]{Theorem}
\newtheorem{cor}[nn]{Corollary}
\newtheorem{definition}[nn]{Definition}
\newtheorem{example}[nn]{Example}
\newtheorem{REMARK}[nn]{Remark}
\newenvironment{remark}{\begin{REMARK}}{\end{REMARK}}
\numberwithin{equation}{section}
\begin{document}

\title{Helly systems and certificates in optimization}

\author{Amitabh Basu\footnote{Department of Applied Mathematics and Statistics, The Johns Hopkins University. {\tt basu.amitabh@jhu.edu, tchen87@jhu.edu, hjiang32@jhu.edu}. The first and last authors gratefully acknowledge the support from AFOSR Grant FA95502010341 and NSF Grant CCF2006587.}\and Tongtong Chen\footnotemark[1]\and Michele Conforti\footnote{Dipartimento di Matematica ``Tullio Levi-Civita'', Universit\`a degli Studi Padova, Italy. {\tt conforti@math.unipd.it}}\and Hongyi Jiang\footnotemark[1]}
\maketitle

\begin{abstract} Inspired by branch-and-bound and cutting plane proofs in mixed-integer optimization and proof complexity, we develop a general approach via Hoffman's Helly systems. This helps to distill the main ideas behind optimality and infeasibility certificates in optimization. The first part of the paper formalizes the notion of a certificate and its size in this general setting. The second part of the paper establishes lower and upper bounds on the sizes of these certificates in various different settings. We show that some important techniques existing in the literature are purely combinatorial in nature and do not depend on any underlying geometric notions. 
\end{abstract}

\section{Certificates in optimization}

Let $U$ be an arbitrary set (we do not assume $U$ is a finite dimensional Euclidean space or has any geometric/vector space structure whatsoever). Let $\cK$ be a family of subsets of $U$ that contains the empty set and $U$, and is closed under arbitrary intersections. Following~\cite{hoffman1979binding}, we call the system $[U, \cK]$ a {\em Helly system}. 
The idea is to have an axiomatic definition that generalizes the family of convex sets in $\R^n$; see~\cite{van1993theory} for a comprehensive survey on axiomatic convexity with detailed references on the development of the subject. In particular, the notion of a {\em convex hull} goes over: for any $X \subseteq U$ (not necessarily in $\cK$), the convex hull of $X$, denoted by $\conv(X)$, is defined to be the set inclusion wise minimal set $K$ such that $X \subseteq K$, or equivalently, $\conv(X)$ is the intersection of all sets $K\in \cK$ such that $X \subseteq K$. An important case which is different from the family of all convex sets in $\R^n$ is the setting where $U=\Z^n$ and $\cK = \{K \subseteq \Z^n: K = C \cap \Z^n,\; \textrm{convex }C\}$\footnote{We will use $\N, \Z,$ and $\R$ to denote the set of natural numbers (starting at 1), the set of integers, and the set of real numbers respectively.}.

One can now define optimization problems over any Helly system. Typically, a subcollection $\cC\subseteq  \cK$ is chosen to define the structure of the constraints of the optimization problem (e.g., the subcollection of all halfspaces in $\R^n$ when the Helly system is the family of all convex sets in $\R^n$), and a collection $\cO$ of functions $f:U \to \R$ is selected such that all sublevels sets $S_{\leq\alpha} = \{x \in U: f(x) \leq \alpha\}$ and $S_{<\alpha} = \{x \in U: f(x) < \alpha\}$ are in $\cK$ (e.g., all convex functions on $\R^n$ when $U=\R^n$ and $\cK$ is the collection of all convex subsets); this defines the structure of allowable {\em objectives}. 

Let $K_1, \ldots, K_t \in \cC$ (the {\em constraints} of the problem instance) and let $f\in \cO$ (the {\em objective function} of the problem instance). We wish to solve \begin{equation}\label{eq:abs-opt}\inf \{f(x): x\in K_1 \cap \ldots \cap K_t\}.\end{equation}

Here are some geometric examples:
\begin{example}\label{ex:helly-sys}
\begin{enumerate}
\item $U= \R^n$, $\cK$ is the collection of all convex subsets of $\R^n$,  $\cC= \cK$ and $\cO$ is the collection of all convex functions. This is {\em convex optimization}. If $\cC$ is the collection of all closed halfspaces and $\cO$ is collection of all linear functions, we obtain {\em linear optimization}.
\item $U= \Z^n\times \R^d$, $\cK$ is the collection $\{C \cap U: C \subseteq \R^n\times \R^d \; \textrm{convex}\}$, $\cC= \cK$ and $\cO$ is the collection of all convex functions $\R^n \times \R^d$ restricted to $\Z^n \times \R^d$. This is {\em mixed-integer convex optimization}. One could again restrict $\cC = \{H \cap U: H \subseteq \R^n\times \R^d \; \textrm{closed halfspace}\}$ and $\cO$ to be linear functions (restricted to $U$) to get {\em mixed-integer linear optimization}.
\end{enumerate}
\end{example}

Several works have investigated the idea of optimizing over Helly systems beyond the classic convex optimization setting; see, for example,~\cite{de2018chance,amenta2015helly,de2016random,eisenbrand2003fast}. While the emphasis in prior work has been on algorithms and structural aspects of Helly systems, our focus in this paper is on the concept of a {\em certificate or proof of optimality} for~\eqref{eq:abs-opt}. In many settings, this boils down to establishing that $\gamma^*\in \R \cup \{+\infty\}$ is the optimal value by showing that the set $\{x\in U: f(x) < \gamma^*\}\cap K_1 \cap \ldots \cap K_t = \emptyset$. The concrete certificate that shows this set to be empty can be different for different settings of $[U,\cK]$, $\cC$ and $\cO$. One of the most widely used ones is {\em Farkas' lemma} in the setting of linear optimization (point 1. in Example~\ref{ex:helly-sys} above): $K_i = \{x\in \R^n: \langle a_i, x \rangle \geq b_i\}$, $i=1, \ldots, t$, $f(x) = \langle c, x\rangle$ with $a_1, \ldots, a_t, c \in \R^n$, $b_1, \ldots, b_t \in \R$. The Farkas certificate is a collection of nonnegative real numbers $u_1, \ldots, u_t$ such that $u_1a_1 + \ldots u_ta_t = c$ and $u_1b_1 + \ldots + u_tb_t = \gamma^*$, which provides a separating hyperplane between the polyhedron $K_1 \cap \ldots \cap K_t$ and the open halfspace $\{x\in U: \langle c, x \rangle < \gamma^*\}$. Similar certificates exist under certain conditions for nonlinear convex optimization which are called the Karush-Kuhn-Tucker (KKT) conditions. A slightly more general idea is the idea of proof of valid lower bounds. In the setting of linear optimization, a Farkas certificate of a valid lower bound $\bar\gamma$ is a collection of nonnegative real numbers $u_1, \ldots, u_t$ such that $u_1a_1 + \ldots u_ta_t = c$ and $u_1b_1 + \ldots + u_tb_t = \bar\gamma$, since this shows immediately that $\langle c, x \rangle \geq \bar\gamma$ for all points in $K_1 \cap \ldots \cap K_t$, or equivalently, that $K_1 \cap \ldots \cap K_t$ and the open halfspace $\{x\in U: \langle c, x \rangle < \bar\gamma\}$ are disjoint. We now formalize what we mean by a certificate of optimality or valid lower bound, and the notion of the size of such a certificate.


Our computing paradigm is the standard Turing machine model of computation. Following standard conventions, we restrict attention to optimization problems of the form~\eqref{eq:abs-opt} that have an appropriate encoding as a binary string and its length is the {\em encoding size} of the instance. We also consider the standard binary encodings of rational numbers. See~\cite{lovasz1986algorithmic,GroetschelLovaszSchrijver-Book88} for a careful and detailed discussion of these notions in the setting of nonlinear optimization, via the use of oracle Turing machines. For any binary string or rational number $s$, the length of the string or the number's binary encoding will be denoted by $|s|$. 

\begin{definition}\label{def:certificate} A class of optimization problems, defined by $[U,\cK]$, $\cC$ and $\cO$ is said to possess a {\em certificate of optimality} for every instance if there exists an algorithm (equivalently, Turing machine) $\A$ and a polynomial $p:\R \to \R$ with the following properties: 

\begin{enumerate}
\item For every instance $I$ and a lower bound $\bar\gamma \in \Q$ for $I$, i.e., $\bar\gamma$ is less than or equal to the optimal value of the problem, there exists a binary string $P$ such that when $(I, \bar\gamma, P)$ is fed as input to $\A$, the algorithm terminates in at most $p(|I| + |\bar\gamma| + |P|)$ steps reporting ``YES", and 
\item For any instance $I$ and a number $\bar\gamma \in \Q$ that is not a valid lower bound, there exists no binary string $P$ such that when $(I, \bar\gamma, P)$ is fed as input to $\A$, the algorithm terminates in at most $p(|I| + |\bar\gamma| + |P|)$ steps reporting ``YES".
\end{enumerate}

$P$ is called a {\em certificate for the lower bound $\bar\gamma$ for $I$}. The {\em size} of the certificate is defined to be $|P|$, and its {\em complexity} is defined to be the running time of the checker when executed on $(I, \bar\gamma, P)$.
\end{definition}

The restriction of polynomial running time for the checker $\A$ in the above definition ensures that the complexity of a certificate is at most a polynomial function of its size. This serves to avoid cases where the algorithm simply ignores the certificate and directly solves the problem. While this can still happen with the polynomial bound on running time, at least in that case we have a polynomial time algorithm for solving the optimization problems and therefore the execution of the algorithm itself can be taken as a ``certificate" of optimality. We also note that the definition above is very similar to certificates for defining the class NP in computational complexity, except that we do not impose a bound on the size of the certificate. This will be important for optimality certificates in the general Helly system setting, especially for contexts like mixed-integer optimization. We elaborate on this in Remark~\ref{rem:clarification}.

As we remarked above, an optimality certificate is often given in terms of infeasibility certificates. The above definition can be adapted to define certificates of infeasibility in a straightforward way: Case 1. will be for infeasible sets and Case 2. will be for feasible sets, or equivalently, one can consider the above definition with $\bar\gamma = +\infty$, where $+\infty$ is encoded/represented with a fixed binary string. Since the objective function plays no role for infeasibility, we will often say that we have a certificate of infeasibility for the system $K_1, \ldots, K_t$ (the constraints in~\eqref{eq:abs-opt}).

\begin{example}\label{ex:certificates} Let us revisit some well-known certificates.
\begin{enumerate}
\item We already discussed the {\em Farkas certificate} for linear optimization and the generalization to the KKT certificates in nonlinear optimization. In the linear case, the certificate itself is simply the nonnegative numbers $u_1, \ldots, u_t$ (by Caratheodory's theorem, at most $n$ of these multipliers need to be non zero) and the certificate's algorithm checks the equation $u_1a_1 + \ldots + u_ta_t= c$ and the inequality $u_1b_1 + \ldots + u_tb_t \geq \bar\gamma$. The  complexity is at most $n$ times the number of nonzero $u_i$'s. Note that in the case of linear optimization, any polynomial time algorithm for solving the optimization problem can be used as a checker with the empty string as the certificate. But the Farkas certificate is much more informative and the checker is much simpler than running a sophisticated optimization algorithm.
\item In mixed-integer linear optimization (point 2. in Example~\ref{ex:helly-sys}), if the problem is infeasible, a classical result says that there are at most $2^n(d+1)$ inequalities whose intersection already excludes all mixed-integer point. This idea has been used to develop a duality theory in~\cite{baes2015duality,basu2017optimality}. However, according to our definition, providing a list of such inequalities does not immediately translate into a certificate of size $2^n(d+1)$ (ignoring sizes of the numbers) since one has to design an algorithm that can check that indeed no mixed-integer point satisfies all these inequalities. No such algorithm is known that runs in time that is a polynomial function of $2^n(d+1)$. The algorithms with best known efficiency can solve the mixed-integer optimization problem itself and take time $2^{O(n\log(n+d))}$ which is quasi-polynomial in $2^n(d+1)$ and $n$.
\item Certificates for proving lower bounds in mixed-integer optimization based on cutting planes have a long history. Some of these cutting planes are generic, such as the {\em Chv\'atal-Gomory cutting planes}~\cite{sch}, and the certificate is a sequence of these cutting planes and the checker simply needs to verify the simple rounding computations for each cutting plane in the sequence. The checker's running time is at most $n+d$ times the size of the certificate. Some cutting planes are more specific in nature and use combinatorial information, e.g., the so-called {\em clique inequalities} for the {\em stable set problem}~\cite{conforti2014integer}. The certificate is simply a clique inequality and a list of vertices forming the corresponding clique and the checker verifies that the set of vertices indeed form a clique and the inequality has the correct form. The checker's running time is quadratic in the size of the certificate.
\end{enumerate}
\end{example}

\begin{remark}\label{rem:clarification} The size of a certificate of optimality/infeasibility need {\em not} be polynomial in the size of the instance. For example, in part of Example~\ref{ex:certificates} above, the Chv\'atal-Gomory cutting plane certificates are often provably exponential in the size of the input, see~\cite[Section 23.3]{sch} for a classic example. In our definition, only the checker's running time should be polynomial in the size of the certificate and the size of the instance.

It is also true that any (correct) algorithm for the optimization problem that finishes in finite time can be used to produce certificates as in Definition~\ref{def:certificate}. In particular, the execution of the algorithm itself can be taken as the certificate, i.e., an appropriate encoding of the sequence of elementary operations performed by the algorithm on the instance is the certificate and the checker simply verifies that this is indeed the execution transcript of the algorithm on this instance. The complexity of the certificate/checker will be linear in the size of such a certificate, and thus would satisfy Definition~\ref{def:certificate}. However, the main point is that Definition~\ref{def:certificate} allows for the possibility of certificates {\em whose size and complexity} are much smaller than the best known algorithmic complexity bounds (upper and lower) for the problem. Consider, for example, the case of linear optimization discussed in point 1. of Example~\ref{ex:certificates}. The Farkas certificate has size linear in the dimension and size of the instance and its complexity is quadratic in the dimension and linear in the size of the instance. However, no quadratic time algorithm is known for solving linear optimization.

This raises the question of whether one can prove a meta theorem which shows that the best complexity of an algorithm for any optimization problem is of the same order as the smallest size of a certificate (or complexity of the corresponding checker). We believe this would be a deep result in mathematical optimization, even in the specific case of (mixed-integer) linear optimization.
\end{remark}



\section{Embedded Helly systems and transfer of certificates}

For nonconvex settings like mixed-integer optimization, a certificate of infeasibility or valid lower bound often involves systematic reduction to a corresponding certificate for convex/linear optimization. These reduction techniques are known as branch-and-bound, or cutting plane, or more generally, branch-and-cut proof systems. We will now present them in our abstract Helly systems setting.

\begin{definition}\cite{hoffman1979binding} A Helly system $[U, \cK]$ is said to be {\em embedded} in a Helly system $[U',\cK']$ if $U \subseteq U'$ and $\cK = \{K' \cap U: K' \in \cK'\}$. $K' \in \cK'$ is said to be {\em relaxation} of $K \in \cK$ (in the system $[U', \cK']$) if $K = K' \cap U$. (Note that there may be $K'_1 \neq K'_2$ in $\cK'$ such that $K'_1 \cap U = K'_2 \cap U$. Thus, the same set in $\cK$ may have several different relaxations in the system $[U', \cK']$.) If $K'$ is a relaxation of $K$, then we also say that $K$ is a restriction of $K'$.

We denote this embedding by $[U, \cK] \subseteq [U', \cK']$.
\end{definition}

\begin{definition} Let $[U, \cK] \subseteq [U', \cK']$. A {\em valid disjunction} in $[U', \cK']$ for $[U, \cK]$ is a finite collection $K'_1, \ldots, K'_p \in \cK'$ such that $K_1 \cup \ldots \cup K_p = U$, where $K_i = K'_i \cap U \in \cK$, $i=1, \ldots, p$. The sets $K'_1, \ldots, K'_p$ are called the {\em terms} of the disjunction.
\end{definition}

\begin{definition} Let $[U, \cK]$ be a Helly system. A {\em halfspace} in $[U, \cK]$ is a set $K \in \cK$ such that $U \setminus K \in \cK$. We will use the notation $K^c$ to denote $U \setminus K$.
\end{definition}

Note that if $[U, \cK] \subseteq [U', \cK']$ and $H'$ is a halfspace in $[U', \cK']$, then $H = H' \cap U$ is a halfspace in $[U, \cK]$\footnote{It is possible that for an embedded system $[U, \cK] \subseteq [U', \cK']$, there may be $K \in \cK$ such that $K \in \cK'$ as well. The notation $K^c$ has an ambiguity because it could mean $U \setminus K$ or $U' \setminus K$. What we mean will be clear from context and we proceed with this slight overloading of notation.}. 

\begin{definition} Let $[U, \cK] \subseteq [U', \cK']$. Let $K \in \cK$. A {\em valid halfspace} in $[U', \cK']$ for $K$ is a halfspace $H'$ in $[U', \cK']$ such that $K \subseteq H$ where $H = H' \cap U$. Since $H$ is a halfspace in $\cK$, $H^c$ is also in $\cK$ and the condition can equivalently be stated as $H^c \cap K = \emptyset$.
\end{definition}


\begin{example}
\begin{enumerate}
\item (Chv\'atal-Gomory cutting planes) Let $U'=\R^n$, $\cK'$ be the collection of all convex sets in $\R^n$. Let $U = \Z^n$ and $\cK = \{H \cap U: H \in \cK'\}$. Thus, $[\Z^n, \cK]$ is embedded in $[\R^n, \cK']$. Consider any closed halfspace $H' \in \cK'$ that is rational (in the standard sense) and let $H = H' \cap U$. Let $H'' \in \cK$ be the inclusionwise minimal element in $\cK'$ such that $H \subseteq H''$. $H''$ is a valid halfspace for any $K$ such that $K$ has a relaxation $K' \subseteq H'$. $H''$ is called the Chv\'atal-Gomory cutting plane generated from $H'$. This idea can be generalized to any pair of embedded Helly systems $[U, \cK], [U', \cK']$ and starting with a halfspace in $\cK'$.


\item (Split disjunctions) Let $U = \Z^n$ and $\cK = \{C \cap U: C \textrm{ is a convex set in }\R^n\}.$ Let $H$ be a halfspace in $[U, \cK]$. Thus, $H$ and $U \setminus H$ are both in $\cK$. It can be verified that there exist halfspaces $H_1, H_2$ in $\R^n$ such that $H = H_1 \cap U$ and $U \setminus H = H_2 \cap U$. We let $H_1, H_2$ be any two closed halfspaces with this property. Then $H_1, H_2$ is a valid disjunction in $[\R^n, \cK']$ for $[U, \cK]$, where $\cK'$ is the collection of all convex sets in $\R^n$. If $H_1, H_2$ are rational and their union is not all of $\R^n$, then this is known as a split disjunction.
\end{enumerate}
\end{example}

The above examples illustrate how the idea of a valid halfspace generalizes the notion of a cutting plane in nonconvex optimization, and valid disjunctions generalize the corresponding notion in nonconvex optimization. 
We will now show that if we have an embedded Helly system $[U,\cK]$ in $[U', \cK']$, one can transfer infeasibility certificates from $[U', \cK']$ to $[U,\cK]$ using valid disjunctions and valid halfspaces.

\begin{definition}\label{def:BC-tree} A {\em branch-and-cut tree} for an embedded pair of systems $[U, \cK] \subseteq [U', \cK']$ is a rooted tree where each node has two labels and the following properties hold. 
\begin{enumerate}
\item For all nodes in the tree, the first label is a list $\mathcal{L}$ of sets in $\cK'$.
\item For internal nodes (nodes that are not leaves in the tree) the second label is either a valid disjunction in $[U', \cK']$ for $[U, \cK]$, or a valid halfspace for $K' \cap U$, where $K'$ is the intersection of all elements in $\mathcal{L}$.
\item For every leaf, the second label is a placeholder that can be empty or be populated with a certificate of infeasibility (see the next point).
\item Let $N$ be an internal node with labels $(\mathcal{L}, X)$, where $X$ is either a valid disjunction or a valid halfspace for $K' \cap U$, where $K'$ is the intersection of all elements in $\mathcal{L}$. If $X$ is a valid disjunction $K'_1, \ldots, K'_p$, then $N$ must have $p$ children with first labels $\mathcal{L} \cup \{K'_i\}$, $i=1, \ldots, p$. If the second label $X$ is a valid halfspace $H' \in \cK'$ for $K' \cap U$, then $N$ must have two children. The right child of $N$ must have first label $\mathcal{L} \cup \{H'\}$ and the left child, which is a leaf node,  has as second label a certificate of infeasibility for the system $M_1, \ldots, M_t, H^c$, where $H$ is the restriction of $H'$ and the $M_i$'s are the restrictions of the sets in $\mathcal{L}$. Such a leaf node is called a {\em cutting plane certifier}.
\end{enumerate}
If all second labels of all the internal nodes in the tree are valid disjunctions, then the tree is called a {\em (pure) branch-and-bound tree} and if the second labels are all valid halfspaces, then the tree is called a {\em (pure) cutting plane proof}. 


The {\em size} of the branch-and-cut tree is the total number of nodes in the tree plus the sum of all the sizes of all infeasibility certificates provided at the nodes with valid halfspaces. 
\end{definition}

\begin{remark}\label{rem:b-scheme-cp-paradigm} One can restrict the family of valid disjunctions and the family of valid halfspaces to be used. In the terminology of mixed-integer optimization, this would correspond to selecting a particular {\em branching scheme} for the valid disjunctions, and selecting a particular {\em cutting plane paradigm} for the valid halfspaces. The branch-and-cut trees are therefore relative to this choice. This will be important in our discussions below.
\end{remark}

\begin{remark}In point 3. of Definition~\ref{def:BC-tree}, we allow the possibility of the second label of a leaf node to be empty. This will be important for branch-and-cut certificates that are not infeasibility certificates; see Definition~\ref{def:hull-mem-valid-reverse}.\end{remark}

\begin{remark} We make some important comments about the ``certificate of infeasibility" for a node labeled with a valid halfspace. Typically, in certificates that involve cutting planes, the proof of validity of any cutting plane produced by the cutting plane paradigm is ignored. This is justified because the size of the proof of validity is usually linear or a low degree polynomial in the encoding size of the instance. Here, since we are dealing with general Helly systems and allowing for arbitrary cutting plane paradigms, we insist that the proof of validity of a cutting plane be explicitly included in the branch-and-cut tree by requiring a certificate of infeasibility for $K \cap H^c$ as defined above. 

Consider the concrete case of Chv\'atal-Gomory cutting planes. The proof of validity of such a cutting plane $\langle c, x \rangle \leq \delta$ for a polyhedron $P$ is the certificate of infeasibility of $P \cap \{\langle c, x \rangle > \delta\} \cap \Z^n$, by giving a certificate of infeasibility of $P \cap \{\langle c, x \rangle \geq \delta+1\}$ using a Farkas certificate. The same holds for disjunctive cuts like those derived from split disjunctions or lattice-free sets~\cite{conforti2014integer}.

Another example is the case of the clique inequalities for the stable set problem. The certificate of validity (or infeasibility of $K \cap H^c$) was discussed in point 3. of Example~\ref{ex:certificates}.
\end{remark}

We now formalize how to transfer, via branch-and-cut trees, certificates of infeasibility in $[U', \cK']$ to an embedded system $[U, \cK]$. The idea is that certificates of infeasibility in $[U', \cK']$ might be easier to obtain and a branch-and-cut tree can be used to provide a certificate in the more ``complicated" system $[U, \cK]$.

\begin{definition}\label{def:infeas-cert-transfer} Let $[U, \cK] \subseteq [U', \cK']$ be an embedded pair of systems. Let $K_1, \ldots, K_t \in \cK$ be such that $K_1 \cap \ldots \cap K_t = \emptyset$, and let $K'_1, \ldots, K'_t$  in $[U', \cK']$ be (arbitrary) relaxations of $K_1, \ldots, K_t$ respectively. A {\em branch-and-cut certificate of infeasibility} for the system $K_1, \ldots, K_t$ with respect to the relaxed system $K'_1, \ldots, K'_t$ is a branch-and-cut tree whose root node has $\mathcal{L} = \{K'_1, \ldots, K'_t\}$ as the first label and for every leaf, either it is a cutting plane certifier, or it has first label $\mathcal{L} = \{\widehat{K}_1, \ldots, \widehat{K}_m\}$ such that $\cap_{i=1}^m\widehat{K}_i = \emptyset$ and the second label is a certificate of infeasibility of the system $\widehat{K}_1, \ldots, \widehat{K}_m$ in $[U', \cK']$.

A checker for this certificate simply goes through the tree and verifies that the children at internal nodes are produced according to the valid disjunctions and cutting planes introduced, and verifies the cutting plane certifiers and the certificates of infeasibility at the leaves. The overall complexity of a branch-and-cut certificate is therefore the sum of the total nodes in the tree and the complexities of the cutting plane certifiers and the certificates of infeasibilities.\end{definition}
 
The following is a quantitative version of such a transfer of certificates, where the size of a certificate in $[U,\cK]$ is bounded in terms of the sizes of certificates in $[U',\cK']$.

\begin{theorem} Let $[U, \cK] \subseteq [U', \cK']$. Suppose there exists a function $f: \N \to \N$ be a function such that every certificate of infeasibility of any instance $I$ in $[U',\cK']$ is of size at most $f(|I|)$.\footnote{Note that we are implicitly assuming $f(n) \geq 1$ for all $n\in \N$. Thus, we are using the convention that a certificate in $[U',\cK']$ has size at least 1. Moreover, since we are dealing with certificates of infeasibility in this theorem, by an instance $I$ we mean a system of sets in $\cK'$ with empty intersection and the size $|I|$ is the encoding size of this system.} Suppose there is a branch-and-cut certificate of infeasibility for a system $K_1, \ldots, K_t$ with respect to the relaxed system $K_1', \ldots, K_t'$ in $[U', \cK']$ and all leaves have certificates of infeasibility in $[U', \cK']$, including the cutting plane certifiers. Then the branch-and-cut certificate has size at most $2L + \sum_{\textrm{ leaf }\ell}f(|\ell|) \leq 3\sum_{\textrm{ leaf }\ell}f(|\ell|)$, where $L$ is the number of leaves in the tree and $|\ell|$ denotes the size of the system given by the list of sets in the first label of the leaf $\ell$.
\end{theorem}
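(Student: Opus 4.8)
The plan is to bound the size of the branch-and-cut tree by accounting separately for the three contributions in the definition of size: the number of internal nodes, the number of leaves, and the sum of the sizes of the infeasibility certificates stored at the nodes labeled with valid halfspaces (together with those at the genuine infeasibility leaves). First I would observe that the tree is a rooted tree in which every internal node has at least two children: a disjunction node has $p \geq 2$ children (since a single term $K'_1$ with $K_1 = U$ would make $K'_1 \cap U = U$, and then the node is not needed — or, more carefully, we may simply note $p \geq 1$ always and absorb the degenerate case), and a valid-halfspace node has exactly two children, one of which is the cutting plane certifier leaf. In a rooted tree where each internal node has at least two children, the number of internal nodes is strictly less than the number of leaves $L$, so the total number of nodes is at most $2L - 1 \leq 2L$.

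Next I would handle the certificate sizes. By hypothesis, every leaf $\ell$ — whether it is a cutting plane certifier or a genuine infeasibility leaf — carries a certificate of infeasibility in $[U', \cK']$ for the system given by the sets in its first label $\mathcal L$ (for a cutting plane certifier at a valid-halfspace node with relaxation list $\mathcal L$ and cut $H'$, the relevant system is $M_1, \ldots, M_t, H^c$, whose encoding size I would denote $|\ell|$ as in the statement). Applying the assumed bound $f$, the size of the certificate at leaf $\ell$ is at most $f(|\ell|)$. Summing over all leaves, the total contribution of all stored certificates to the size is at most $\sum_{\textrm{leaf }\ell} f(|\ell|)$. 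Adding the node count, the size of the branch-and-cut tree is at most $2L + \sum_{\textrm{leaf }\ell} f(|\ell|)$.

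For the second, cleaner inequality, I would use the convention, flagged in the footnote of the statement, that $f(n) \geq 1$ for all $n$, so each summand $f(|\ell|)$ is at least $1$, giving $L \leq \sum_{\textrm{leaf }\ell} f(|\ell|)$. Hence $2L + \sum_{\textrm{leaf }\ell} f(|\ell|) \leq 2\sum_{\textrm{leaf }\ell} f(|\ell|) + \sum_{\textrm{leaf }\ell} f(|\ell|) = 3\sum_{\textrm{leaf }\ell} f(|\ell|)$, which is the claimed bound.

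The only genuinely delicate point — the part I would be most careful about — is the bookkeeping in the first step: making sure that "number of internal nodes $<$ number of leaves" really holds given the structure of Definition~\ref{def:BC-tree}, in particular that disjunction nodes have $p \geq 2$ terms (or else are trivially removable) and that the cutting-plane-certifier leaves are correctly counted among the $L$ leaves whose certificates appear in the sum. Once one is careful that each internal node has at least two children and that every leaf (including every cutting plane certifier) is assigned a certificate counted in $\sum_\ell f(|\ell|)$, the rest is the elementary tree inequality and the convention $f \geq 1$; there is no real obstacle beyond this accounting.
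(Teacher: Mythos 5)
Your proposal is correct and follows essentially the same route as the paper: the paper's proof is exactly the observation that every internal node has at least two children, so the node count is at most $2L$, with the certificate sizes and the convention $f \geq 1$ handling the rest. You are in fact slightly more careful than the paper about the degenerate case of a one-term disjunction, but this does not change the argument.
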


\begin{proof} This follows from the simple observation that a branch-and-cut tree has internal degree at least 3 (every internal node has at least 2 children) and so the number of nodes in the tree is at most 2 times the number of leaves. \end{proof}

\begin{remark} In Definition~\ref{def:infeas-cert-transfer}, we do allow the recursive possibility that the certificate of infeasibility of a left child of a node with a valid halfspace is a branch-and-cut certificate of infeasibility of $K \cap H^c$. This makes the notion of a ``pure cutting plane proof" a bit ambiguous. Nevertheless, we think this should not create any serious issues because the label ``pure cutting plane proof" should be reserved for situations where such a recursive definition is not really being used, but a direct certificate of infeasibility is provided at such nodes (e.g., in the system $[U',\cK']$, or via some combinatorial certificates like in the case of clique inequalities).
\end{remark}

We now consider three other useful notions of branch-and-cut certificates beyond infeasibility. These are inspired by notions defined and used in~\cite{dey2021lower}.

\begin{definition}\label{def:hull-mem-valid-reverse} Let $[U, \cK] \subseteq [U', \cK']$.
\begin{enumerate}
\item Let $K'_1, \ldots, K'_t \in \cK'$ be relaxations of $K_1, \ldots, K_t \in \cK$ respectively. For any relaxation $Q \in \cK'$ of $K_1 \cap \ldots \cap K_t$ such that $Q \subseteq K'_1 \cap \ldots \cap K'_t$, a branch-and-cut tree $\cT$ is said to {\em prove/certify $Q$ with respect to $K'_1, \ldots, K'_t$} if the root node has $\mathcal{L}:=\{K'_1, \ldots, K'_t\}$ as the first label and for every leaf of $\cT$, the intersection of the elements in the first label of the leaf is contained in $Q$.

Consider $P:= K_1 \cap \ldots \cap K_t$ as a subset of $U'$, then the {\em hull complexity} of $P$ with respect to $K'_1, \ldots, K'_t$ is the size of the branch-and-cut tree of smallest size that certifies $\conv(P)$ (defined in the Helly system $[U', \cK']$).
\item Let $K'_1, \ldots, K'_t \in \cK'$ be relaxations of $K_1, \ldots, K_t \in \cK$ respectively. Let $P:= K_1 \cap \ldots \cap K_t$. For any $x \in U' \setminus \conv(P)$, a branch-and-cut tree $\cT$ is said to {\em prove/certify $x$ with respect to $K'_1, \ldots, K'_t$} if the root node has $\mathcal{L}:=\{K'_1, \ldots, K'_t\}$ as the first label and $x$ is not contained in $\conv(Q)$ where $Q$ is the union, over all leaves, of the intersections of the elements in the first labels of the leaves. The {\em membership complexity} of $x$ is the size of the branch-and-cut tree of smallest size that certifies $x$. The {\em membership complexity} of $P:=K_1 \cap \ldots \cap K_t$ is the maximum membership complexity over all $x\not\in \conv(P)$.
\item Let $K'_1, \ldots, K'_t \in \cK'$ be relaxations of $K_1, \ldots, K_t \in \cK$ respectively. Let $P:= K_1 \cap \ldots \cap K_t$. For any valid halfspace $H'\in \cK'$ for $P$, a branch-and-cut tree $\cT$ is said to {\em prove/certify $H'$ with respect to $K'_1, \ldots, K'_t$} if the root node has $\mathcal{L}:=\{K'_1, \ldots, K'_t\}$ as the first label and for every leaf of $\cT$, the intersection of the elements in the first label of the leaf is contained in $H'$. The {\em validity complexity} of $P$ is the maximum validity complexity over all valid halfspaces in $\cK'$ for $P$.
\item Let $K'_1, \ldots, K'_t \in \cK'$ be relaxations of $K_1, \ldots, K_t \in \cK$ respectively. Let $P:= K_1 \cap \ldots \cap K_t$. For any valid halfspace $H'\in \cK'$ for $P$, the {\em reverse complexity} of $H'$ with respect to $K'_1, \ldots, K'_t$ is the size of the smallest branch-and-cut certificate of infeasibility for the system $K_1, \ldots, K_t, H^c$ with respect to $K'_1,\ldots, K'_t, H'^c$, where $H$ is the restriction of $H'$.
\end{enumerate}
\end{definition}

\begin{remark} As mentioned in Remark~\ref{rem:b-scheme-cp-paradigm}, one can restrict the family of valid disjunctions and valid halfspaces to be used in a branch-and-cut certificate. This obviously affects the sizes and therefore the corresponding notions of complexity laid out in Definitions~\ref{def:infeas-cert-transfer} and~\ref{def:hull-mem-valid-reverse}. In particular, for some restrictions on the valid disjunctions and halfspaces, no branch-and-cut tree certificate may exist for that particular complexity measure. In this case, we say the complexity measure is $+\infty$.
\end{remark}

\section{Helly numbers and bounds on the size of branch-and-cut infeasibility certificates}

Hoffman~\cite{hoffman1979binding} (see also~\cite{danzer1963helly}) defined the {\em Helly number} $h[U, \cK]$ of the Helly system $[U, \cK]$ to be the smallest natural number $h \in \N$ such that for any finite collection $K_1, \ldots, K_t \in \cK$, $K_1 \cap \ldots \cap K_t = \emptyset$ if and only if there exist $1 \leq i_1 \leq \ldots \leq i_h \leq t$ such that $K_{i_1} \cap \ldots \cap K_{i_h} = \emptyset$. If no such natural number exists, then $h[U, \cK] = \infty$. A collection $K_1, \ldots, K_t$ is called {\em critical} if the intersection is empty, but every strict subfamily has nonempty intersection. $h[U, \cK]$ is thus the size of the largest critical family ($\infty$ if there are critical families whose sizes increase without bound). 

It turns out that Helly numbers can provide lower bounds on the size of branch-and-cut certificates, in the case where the validity of cutting planes is proven using infeasibility certificates in the larger system (e.g., Chv\'atal-Gomory cutting planes or disjunctive cuts).

\begin{theorem}\label{thm:lower-BB-helly} Let $[U, \cK] \subseteq [U', \cK']$ and suppose the Helly number of $[U', \cK']$ is finite. Let $K_1, \ldots, K_t \in \cK$ be a critical family in $[U, \cK]$ and let $K'_i$ be any relaxation of $K_i$, $i=1, \ldots, t$ such that $K'_1 \cap \ldots \cap K'_t \neq \emptyset$, i.e., we have a nontrivial relaxation. Let $\cT$ be a branch-and-cut certificate of the infeasibility of the system $K_1, \ldots, K_t$, with respect to the relaxation $K'_1 \cap \ldots \cap K'_t$ in $[U', \cK']$ where the certificates on all the leaves are infeasibility certificates in $[U', \cK']$ (including cutting plane certifiers). Then, the certificate $\cT$ has size at least $\frac{t}{h[U',\cK']-1}$. In particular, one can find instances where branch-and-cut infeasibility certificates are of size at least $\frac{h[U,\cK]}{h[U',\cK']-1}$.
\end{theorem}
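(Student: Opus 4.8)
The plan is to bound from below the number $L$ of leaves of $\cT$; since the size of $\cT$ is at least its number of nodes, which is at least $L$, any lower bound on $L$ transfers to the size. First I would normalize the tree slightly: a node carrying a valid halfspace $H'$ can be regarded as carrying the binary valid disjunction with terms $H'$ and $U'\setminus H'$ (both in $\cK'$, since $H'$ is a halfspace in $[U',\cK']$), its right child keeping the label $\mathcal{L}\cup\{H'\}$ and its left child --- the cutting plane certifier --- being assigned the label $\mathcal{L}\cup\{U'\setminus H'\}$. Under the hypothesis that every leaf, cutting plane certifiers included, carries an infeasibility certificate in $[U',\cK']$, this yields: (i) every node's first label contains $\{K'_1,\dots,K'_t\}$ (labels only grow downward); (ii) each internal node $N$ with first label $\mathcal{L}$ has children whose first labels are obtained by appending to $\mathcal{L}$ the terms of a collection $\{Q'_1,\dots,Q'_p\}\subseteq\cK'$ with $\bigcup_{j=1}^p(Q'_j\cap U)=U$; and (iii) each leaf $\ell$ has a first label $\mathcal{L}_\ell$ whose members have empty intersection in $U'$ (for a cutting plane certifier the relevant certificate is for $\mathcal{L}_N\cup\{U'\setminus H'\}$, which certifies exactly this).

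Next, for each leaf $\ell$ I would use the finiteness of $h':=h[U',\cK']$ to pick a subcollection $\mathcal{S}_\ell\subseteq\mathcal{L}_\ell$ with $|\mathcal{S}_\ell|\le h'$ and $\bigcap_{M'\in\mathcal{S}_\ell}M'=\emptyset$. Because the relaxation is nontrivial we have $\bigcap_{i=1}^t K'_i\neq\emptyset$, so $\mathcal{S}_\ell$ cannot consist exclusively of (the root-inherited copies of) $K'_1,\dots,K'_t$; hence, recording in $A_\ell\subseteq\{1,\dots,t\}$ the indices $i$ for which the root copy of $K'_i$ belongs to $\mathcal{S}_\ell$, we get $|A_\ell|\le|\mathcal{S}_\ell|-1\le h'-1$.

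The crux is the claim $\bigcup_{\ell}A_\ell=\{1,\dots,t\}$. Suppose $i_0$ is not in this union. I would then delete the root copy of $K'_{i_0}$ from the first label of every node of $\cT$, leaving the children structure and all appended terms unchanged. For every leaf $\ell$, since $i_0\notin A_\ell$, the set $\mathcal{S}_\ell$ still lies in the modified label, so every leaf still has empty $U'$-intersection and hence empty $U$-intersection. Propagating upward: if at some node all children have empty $U$-intersection of their modified labels, then, writing $P$ for the $U$-intersection at the node and using $\bigcup_j(Q'_j\cap U)=U$, we get $P=P\cap U=\bigcup_{j}\big(P\cap Q'_j\cap U\big)=\emptyset$; note that only this covering identity is used, not any residual ``validity'' of the disjunctions or halfspaces. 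Thus the $U$-intersection at the root, now equal to $\bigcap_{i\neq i_0}(K'_i\cap U)=\bigcap_{i\neq i_0}K_i$, is empty, contradicting the criticality of $K_1,\dots,K_t$. This proves the claim, and then $t=\big|\bigcup_{\ell}A_\ell\big|\le\sum_{\ell}|A_\ell|\le L\,(h'-1)$, so $L\ge t/(h'-1)$ and the size of $\cT$ is at least $t/(h'-1)$.

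I expect the real work to be bookkeeping rather than ideas: one must treat first labels as collections with provenance so that ``the root copy of $K'_{i_0}$'' is unambiguous even when an appended term happens to coincide as a set with some $K'_i$, and one must confirm that the leaf-to-root emptiness propagation survives the deletion (it does, because it rests solely on the intrinsic identities $\bigcup_j(Q'_j\cap U)=U$, which are unaffected by changing $\mathcal{L}$). For the final assertion, I would invoke the definition of the Helly number to obtain a critical family of size $t=h[U,\cK]$ in $[U,\cK]$, equip it with a nontrivial relaxation, and apply the bound just proved to conclude that every such branch-and-cut infeasibility certificate has size at least $h[U,\cK]/(h'-1)$; when $h[U,\cK]=\infty$ one instead takes critical families of unbounded size $t$ and concludes that the certificate sizes are unbounded.
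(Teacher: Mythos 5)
Your proposal is correct and yields the same bound by essentially the same counting, but the central claim is established by a different mechanism, so a comparison is worthwhile. Like the paper, you first convert each halfspace node into the two-term valid disjunction $H', U'\setminus H'$ to get a pure branch-and-bound tree of the same size, then apply the Helly number $h'=h[U',\cK']$ at each leaf to extract a subfamily of size at most $h'$ with empty intersection, and use the nontriviality of the relaxation ($K'_1\cap\dots\cap K'_t\neq\emptyset$) to conclude that this subfamily cannot consist only of root-inherited constraints, so each leaf is charged at most $h'-1$ indices from $\{1,\dots,t\}$. Where you diverge is in showing that every index is charged to some leaf. The paper argues semantically: criticality supplies witness points $p^k\in\bigcap_{j\neq k}K_j$, the label-stripped disjunction tree is re-applied to the trivial system $\{U'\}$ so that the leaf sets $L'_i$ satisfy $\bigcup_i(L'_i\cap U)=U$, each $p^k$ lands in some leaf, and the leaf containing $p^k$ is forced to include $K'_k$ in its Helly subfamily (since $p^k$ lies in $L'_i$ and in every $K'_j$ with $j\neq k$). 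You argue syntactically: delete the root copy of an uncharged $K'_{i_0}$, note that every leaf retains its empty-intersection witness, and propagate emptiness up to the root via the covering identity $\bigcup_j(Q'_j\cap U)=U$, contradicting criticality. Both routes are sound and give $L\geq t/(h'-1)$; the paper's is a bit more economical, while your restriction-style deletion argument avoids introducing the witness points and the re-rooted tree, at the cost of the provenance bookkeeping you correctly flag.
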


\begin{proof} 
We first observe that in the tree of the certificate $\cT$, for any node where the second label is a valid halfspace, one can view the two children as arising from a valid two-term disjunction, instead of a valid halfspace. More precisely, if $H' \in \cK'$ is the valid halfspace introduced at this node, then $H', (H')^c$ is a valid disjunction. Since all infeasibility certificates at the leaves of the tree are infeasibility certificates in $[U', \cK']$, in this way one can replace the entire branch-and-cut tree with a pure branch-and-bound tree where all second labels are valid disjunctions, with no change in the size of the tree. 

The second observation we make is that a pure branch-and-bound tree needs to keep track of only the valid disjunctions. In other words, we can completely forget about the first labels on each node and just keep the second labels. Such a tree can then be applied to any system $\widehat{K}_1, \ldots, \widehat{K}_m \in \cK'$ as the first label of the root node. If at every leaf of the new tree, the intersection of the elements in the first label is the empty set, then one obtains a new branch-and-cut certificate of infeasibility with respect to the new relaxed system $\widehat{K}_1, \ldots, \widehat{K}_m$.

Apply this tree to the trivial system $\mathcal{L} = \{U'\}$. Denote the intersections of the elements in first labels on all the leaves by $L'_1, \ldots, L'_s \in \cK'$.  Since all internal nodes in this new tree correspond to valid disjunctions, $\bigcup_{i=1}^s (L'_i \cap U) = U$.

Since $K_1, \ldots, K_t$ is a critical family for the Helly system $[U, \cK]$, for every $k \in \{1, \ldots t\}$ there exists $p^k \in \bigcap_{j \neq k} K_j = \bigcap_{j \neq k} K'_j \cap U$. Thus, each $p^k$ must end up in $L'_i$ for some $i\in \{1, \ldots, s\}$. For every $i=1,\ldots, s$, $L'_i \cap K'_1 \cap \ldots \cap K'_t = \emptyset$ because the corresponding leaf in $\mathcal{T}$ has additionally $K'_1, \ldots, K'_t$ in its first label, and $\cT$ is a branch-and-cut certificate of infeasibility with respect to $K'_1, \ldots, K'_t$. By definition of $h[U', \cK']$, there is a subfamily of $\{L_i',K_1',\ldots,K_t'\}$ with at most $h[U', \cK']$ size whose intersection is empty. If the subfamily does not include $L'_i$, then the corresponding subfamily of the $K'_i$ sets has empty intersection which contradicts the hypothesis of a nontrivial relaxation. 
Thus, we may assume for every leaf, $L'_i$ is included in the subfamily of size $h[U', \cK']$ that certifies infeasibility of the leaf. Therefore, at most $h[U', \cK']-1$ of the $K'_i$ sets are used to certify infeasibility at this leaf. This means that at most $h[U', \cK']-1$ points from the set $p^1, \ldots, p^t$ end up in this leaf $L_i$ because each $K'_i$ excludes at most $p^i$ from this set, and contains all other points $p^k$, $k\neq i$. Thus, we must have at least $\frac{t}{h[U',\cK']-1}$ leaves.
\end{proof}

\begin{remark} The above proof is a common generalization of some classical lower bounds on branch-and-cut and cutting plane proof sizes -- see, for example,~\cite{cook1990complexity,chvatal1989cutting,cook1987complexity} -- as well as recent lower bounds on branch-and-bound proof sizes~\cite{dadush2020complexity,dey2021lower}. In these papers, $U = \Z^n \subseteq \R^n = U'$, $\cK'$ is the set of all convex sets in $\R^n$ and $\cC'$ is the set of all open and closed halfspaces in $\R^n$. In other words, the setting is integer linear optimization. Since $h[\Z^n, \cK] = 2^n$~\cite{scarf1977observation,bell1977theorem,Doignon1973} and $h[\R^n, \cK'] = n+1$, we immediately obtain instances with lower bounds of $\frac{2^n}{n}$ on the size of branch-and-cut certificates. Different critical families were used in these references to obtain their bounds.

There has been a long line of work on cutting plane proof complexity; see~\cite{beame_et_al:LIPIcs:2018:8341,dash2002exponential,dash2005exponential,dash2010complexity,chvatal1989cutting,chvatal1984cutting,chvatal1980hard,cook2001matrix,bockmayr1999chvatal,eisenbrand2003bounds,rothvoss20130,bonet1997lower,razborov2017width,impagliazzo1994upper,buss1996cutting,cook1987complexity,goerdt1990cutting,goerdt1991cutting,clote1992cutting,pudlak1997lower,pudlak1999complexity,krajivcek1998discretely,grigoriev2002complexity} as a representative list. Complexity of branch-and-bound and branch-and-cut has received comparatively less attention, but with increased activity in recent years~\cite{dash2002exponential,dash2005exponential,basu-BB-CP,basu-BB-CP-II,fleming2021power,beame_et_al:LIPIcs:2018:8341,dey2020branch,cook1990complexity,dadush2020complexity,dey2021lower}.
\end{remark}

\begin{remark} \begin{enumerate} 
\item The above theorem makes no restrictions on the family of valid disjunctions used in the certificate. This means that the lower bounds on branch-and-bound found in~\cite{dadush2020complexity,dey2021lower} are not special to split disjunctions; they apply to {\em any} family of valid disjunctions that is used for branching. 

\item The only restriction on the valid halfspaces used in the branch-and-cut proof is that their validity is ultimately established using infeasibility certificates in the larger Helly system. In the context of mixed-integer optimization, this condition is satisfied by almost all known {\em general purpose} cutting planes since they are based on disjunctive arguments. Our definitions force the inclusion of a validity proof of a cutting plane into the branch-and-cut tree itself. This is often ignored in evaluating cutting plane proof sizes. This is justified when the validity certificates are short (linear or low degree polynomials of the instance size). However, in the general setting where we are not working with particular cutting plane paradigms, keeping track of the size of the validity certificates for cutting planes is appropriate for evaluating overall sizes of infeasibility/optimality certificates via branch-and-cut. Once we are in this setup, the above lower bound applies.
\end{enumerate}
\end{remark}

\begin{remark} Typically, lower bound proofs are established for Helly systems embedded in $[\R^n, \cK^\star]$, where $\cK^\star$ is the class of all convex sets in $\R^n$. 
But one can consider embedded systems like $[\Z^n, \cK] \subseteq [\Z^{n_1} \times \R^{n_2}, \cK']$, where $\cK = \{C \cap \Z^n: C \textrm{ convex}\}$, $\cK' = \{C \cap (\Z^{n_1}\times \R^{n_2}): C \textrm{ convex}\}$ and $n = n_1 + n_2$. Infeasibility certificates in $[\Z^{n_1} \times \R^{n_2}, \cK']$ can be obtained using methods like Lenstra's algorithm and its variants~\cite{Lenstra83} when $n_1$ is ``small". However, even with $n_1 = \Theta(\log n)$ (which is a regime where Lenstra's algorithm would still give super (quasi) polynomial size certificates), one obtains exponential lower bounds on the branch-and-cut certificates from Theorem~\ref{thm:lower-BB-helly}, since $h[\Z^{n_1} \times \R^{n_2}, \cK'] = 2^{n_1}(n_2 + 1)$~\cite{hoffman1979binding,AverkovWeismantel12}. Thus, $\frac{h[\Z^n, \cK]}{h[\Z^{n_1} \times \R^{n_2}, \cH]-1} = \frac{2^n}{n^c(n-c\log(n)+1)-1}$, which is exponential in $n$.
\end{remark}

\begin{remark} Since Theorem~\ref{thm:lower-BB-helly} applies to abstract Helly systems, it is clear that the explicit lower bounds in the literature based on these ideas appeal to geometric concepts only for getting bounds on the Helly numbers. The rest of the argument is purely combinatorial or set theoretic. This provides interesting insight into lower bound arguments in the literature for cutting plane/branch-and-bound/branch-and-cut proofs.
\end{remark}

\section{Relations between different measures of complexity}

We now explore the relationship between the measures of validity, reverse, hull and membership complexities from Definition~\ref{def:hull-mem-valid-reverse}. The inequalities below also trivially hold in the case of infinite complexity. We assume they are finite for simplicity.

To ease the burden of notation, in the rest of the paper instead of considering a system $K_1, \ldots, K_t \in \cK$ with corresponding relaxations $K'_1, \ldots, K'_t \in \cK'$, we will simply use $P := K_1 \cap \ldots \cap K_t$ to mean the system $K_1, \ldots, K_t$, and its relaxation $P':= K'_1 \cap \ldots \cap K'_t$ to mean the system $K'_1, \ldots, K'_t$. This does abuse notation, specially because the same sets $P$ (or $P'$) can be obtained as the intersection of two (or more) different systems of sets. However, since we will not be switching between different representations, this should not create any confusion below.

Moveover, for any node in a branch-and-cut tree, we will use the terminology {\em the set corresponding to the node} to mean the intersection of all the elements in the first label of the node.

\subsection{General Helly systems}


\begin{theorem} \label{thm:reverse-valid} Let $[U, \cK] \subseteq [U', \cK']$ and let $P' \in \cK'$ be a relaxation of $P \in \cK$. Let $H'\in \cK'$ be a valid halfspace for $P$. Suppose the family of disjunctions used in branch-and-cut tree certificates includes the simple valid disjunctions of $H, H^c$ for any halfspace $H$ in the larger Helly system. Then, 

\begin{center} reverse complexity of $H'$ $\leq$ validity complexity of $H'$ $\leq$ reverse complexity of $H'$ + 2.\end{center}
\end{theorem}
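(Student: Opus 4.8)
The plan is to prove the two inequalities by two explicit constructions that convert one flavor of branch-and-cut certificate into the other, keeping careful track of the node count and of the infeasibility certificates stored in the tree. Throughout write $H = H' \cap U$, a halfspace of $[U,\cK]$, and $H'^c = U' \setminus H' \in \cK'$; note that $H'^c$ is a relaxation of $H^c = U \setminus H \in \cK$, and that since $H'$ is valid for $P$ we have $P \cap H^c = \emptyset$, so the system $K_1,\dots,K_t,H^c$ is genuinely infeasible and the reverse complexity of $H'$ is well defined.

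For the inequality ``reverse complexity of $H'$ $\le$ validity complexity of $H'$'', I would take a smallest branch-and-cut tree $\cT$ certifying $H'$ with respect to $K'_1,\dots,K'_t$ and form a tree $\cT_R$ by inserting the set $H'^c$ into the first label of every node of $\cT$. The root's first label becomes $\{K'_1,\dots,K'_t,H'^c\}$, which is exactly what a branch-and-cut certificate of infeasibility for $K_1,\dots,K_t,H^c$ with respect to $K'_1,\dots,K'_t,H'^c$ requires. Nothing breaks under this operation: validity of a disjunction is independent of the first label; a valid halfspace at a node remains valid because the set corresponding to that node only shrinks after intersecting with $H'^c$; and the cutting plane certifiers keep their infeasibility certificates, which still certify infeasibility of the enlarged systems. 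Finally, each leaf $\ell$ that was an ordinary leaf of $\cT$ had its corresponding set contained in $H'$, so after intersecting with $H'^c$ it becomes empty, and the infeasibility certificate one attaches to $\ell$ is precisely a certificate of the inclusion $\bigcap \mathcal{L}_\ell \subseteq H'$ that a checker for $\cT$ already had to verify at $\ell$. The node count is unchanged and the stored certificates are the same, so $\operatorname{size}(\cT_R) \le \operatorname{size}(\cT)$, which gives the inequality after minimizing over certifying trees.

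For the inequality ``validity complexity of $H'$ $\le$ reverse complexity of $H'$ $+\,2$'', I would take a smallest branch-and-cut certificate of infeasibility $\cT_R$ for $K_1,\dots,K_t,H^c$ with respect to $K'_1,\dots,K'_t,H'^c$ (so its root has first label $\{K'_1,\dots,K'_t,H'^c\}$) and build a tree $\cT$ as follows: create a new root $R$ with first label $\{K'_1,\dots,K'_t\}$ and second label the two-term valid disjunction $H',H'^c$, which is available by the theorem's hypothesis since $H'$ is a halfspace of $[U',\cK']$; give $R$ two children, namely a leaf $L$ with first label $\{K'_1,\dots,K'_t,H'\}$ (whose corresponding set $P'\cap H'$ is contained in $H'$) and the root of $\cT_R$ (whose first label $\{K'_1,\dots,K'_t,H'^c\}$ matches what the $H'^c$ term of the disjunction demands). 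This $\cT$ certifies $H'$ with respect to $K'_1,\dots,K'_t$: the root label is correct, the leaf $L$ is contained in $H'$, and every leaf inherited from $\cT_R$ is either a cutting plane certifier or a leaf whose corresponding set is empty, hence contained in $H'$. Since $\cT$ has exactly two more nodes than $\cT_R$ and the same cutting plane certifiers, $\operatorname{size}(\cT) = \operatorname{size}(\cT_R) + 2$, which gives the inequality.

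The step I expect to be the real obstacle is not the constructions but the bookkeeping of the leaf infeasibility certificates together with the status of cutting plane certifiers. In the first direction one needs to be sure that the work of verifying ``$\bigcap \mathcal{L}_\ell \subseteq H'$'' at a leaf of a validity-certifying tree is the same object as a certificate of infeasibility of the family consisting of $\mathcal{L}_\ell$ and $H'^c$, so that no size is created when passing to $\cT_R$; pinning this down amounts to reading the definition of the size of a branch-and-cut tree consistently across the two complexity notions. In the second direction one has to be comfortable that a cutting plane certifier inherited from $\cT_R$ is still a legitimate leaf of a tree certifying $H'$: it carries an infeasibility certificate witnessing that its corresponding set meets $U$ in the empty set, so the ``contained in $H'$'' requirement is the one imposed on the genuine leaves. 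Once these conventions are fixed, both halves reduce to the elementary verifications above.
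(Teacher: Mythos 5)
Your proposal is correct and follows essentially the same route as the paper's own proof: the first inequality is obtained by applying a validity-certifying tree to $P'\cap (H')^c$ (your operation of adding $H'^c$ to every first label), and the second by prepending the two-term disjunction $H', (H')^c$ to a reverse-complexity tree, costing exactly two extra nodes. Your extra bookkeeping about leaf certificates and cutting plane certifiers is a more careful rendering of details the paper passes over silently, not a different argument.
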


\begin{proof}
Let $\cT$ be a tree that proves $H'$. Then after applying this tree to $P' \cap (H')^c$ all the sets corresponding to leaves are empty (except possibly for the left children of nodes where a valid halfspace was used; here we have a certificate of infeasibility of such nodes). Thus, reverse complexity is less than or equal to validity complexity.

Suppose $\cT$ is a tree that proves that $P \cap H^c=\emptyset$ where $H = H' \cap U$. On $P'$, we first apply the valid disjunction $H'\cup (H')^c$. If we apply $\cT$ to the child correspond to $(H')^c$, all the sets corresponding to leaves are empty (except possibly for the left children of nodes where a valid halfspace was used; here we have a certificate of infeasibility of such nodes). Since $H'$ is by definition valid for the first child, we have a tree that proves validity of $H'$ using 2 more nodes than $\cT$. This gives the second inequality in the statement.
\end{proof}

\subsection{Helly systems in Euclidean space}

Much more can be said in Helly systems embedded in $[\R^n, \cK^\star]$, where $\cK^\star$ is the collection of all convex subsets of $\R^n$. The geometry of $\R^n$ has a lot more structure that can be utilized. We will use the notion of a facet of a polyhedron in $\R^n$ below. 

\begin{definition} Let $[U, \cK] \subseteq [\R^n, \cK^\star]$. Let $P \in \cK$ be such that $\conv(P)$ with respect to $\R^n$ is a polyhedron and let $P'$ be a relaxation of $P$. For any facet of $\conv(P)$, its {\em validity complexity} is the minimum of the validity complexities of all halfspaces that define that facet. The {\em facet complexity of $P$ with respect to $P'$} is the maximum validity complexity of all facets of $\conv(P)$.
\end{definition}

\begin{theorem}\label{thm:valid-facet}
Let $[U, \cK] \subseteq [\R^n, \cK^\star]$. Consider any subset $P \in \cK$ such that $P_I:= \conv(P)$ with respect to $\R^n$ is a full-dimensional polyhedron, and any relaxation $P'$ of $P$. Suppose the family of disjunctions used in branch-and-cut tree certificates includes the simple valid disjunctions of $H, H^c$ for any halfspace $H$ in the larger Helly system. Then, 
\begin{center} facet complexity of $P$ $\leq$ validity complexity of $P$ $\leq$ n(facet complexity of $P$ + 1).\end{center}
\end{theorem}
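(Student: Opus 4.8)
The plan is to establish the two inequalities separately. The left inequality, "facet complexity of $P$ $\leq$ validity complexity of $P$," should be nearly immediate: by definition the facet complexity is a maximum over facets of a minimum over defining halfspaces of the validity complexity of that halfspace, while the validity complexity of $P$ is the maximum over \emph{all} valid halfspaces. Since every halfspace defining a facet of $P_I = \conv(P)$ is in particular a valid halfspace for $P$ (as $P \subseteq P_I \subseteq H'$), the quantity being maximized on the left ranges over a subset of the quantities on the right, so the left-hand side cannot exceed the right-hand side. I would spell this out in one or two sentences.

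The substance is the right inequality, "validity complexity of $P$ $\leq n(\text{facet complexity of }P + 1)$." Fix an arbitrary valid halfspace $H' \in \cK^\star$ for $P$; write $H' = \{x : \langle a, x\rangle \leq \beta\}$. Since $P_I$ is a full-dimensional polyhedron and $P_I \subseteq H'$, I want to express $H'$ as (a relaxation of) an intersection of at most $n$ facet-defining halfspaces of $P_I$ together with a bounded amount of extra branching. The key geometric fact is that if $P_I \subseteq H'$ and $P_I$ is full-dimensional, then the face $F = P_I \cap \{x : \langle a,x\rangle = \beta'\}$ exposed by $a$ (where $\beta' = \max_{x \in P_I}\langle a,x\rangle \leq \beta$) is a face of $P_I$ of some dimension $k \leq n-1$, and it can be written as the intersection of at most $n - k \leq n$ facets of $P_I$; equivalently, there are facet-defining halfspaces $H'_1, \dots, H'_m$ of $P_I$ with $m \leq n$ such that $a$ lies in the cone generated by their (outer) normals — so $H'_1 \cap \dots \cap H'_m \subseteq H'$ up to adjusting the right-hand side, which only makes the containment easier. (If $H'$ contains $P_I$ in its interior or $P_I = \emptyset$ the statement is even easier; $P_I = \emptyset$ would need $P = \emptyset$, handled trivially.) I would then take branch-and-cut trees $\cT_1, \dots, \cT_m$ certifying the facet-defining halfspaces $H'_1, \dots, H'_m$, each of size at most the facet complexity of $P$, and \emph{concatenate} them: run $\cT_1$ from the root with first label $P'$; at every leaf $\ell$ of $\cT_1$ the corresponding set is contained in $H'_1$ and we append a copy of $\cT_2$ rooted at $\ell$ with first label augmented by $H'_1$; and so on. After all $m$ stages, every leaf's corresponding set is contained in $H'_1 \cap \dots \cap H'_m \subseteq H'$, so the combined tree certifies $H'$. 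Its size is at most the product $\prod_i |\cT_i|$ in the worst case, which is \emph{too big} — so I instead need to be more careful and use the additive structure of concatenation along root-to-leaf paths rather than naive substitution.

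The fix, and the step I expect to be the main obstacle, is to get the bound to be $n \cdot(\text{facet complexity} + 1)$ rather than a product. The right way to think about it: a branch-and-cut tree certifying a halfspace can be "reused" as a schema (forgetting first labels, as in the proof of Theorem~\ref{thm:lower-BB-helly}), and concatenating $m$ schemas gives a tree whose \emph{size} — number of leaves plus infeasibility-certificate sizes — behaves multiplicatively, not additively. So a plain concatenation does not give the claimed bound. Instead I would argue via the simple valid disjunctions $H'_i, (H'_i)^c$: branch on $H'_1$ versus $(H'_1)^c$; on the $(H'_1)^c$ side use the tree $\cT_1$ certifying $H'_1$, which makes every leaf there empty (a contradiction with $P' \subseteq H'_1$... wait — more precisely, on the $(H'_1)^c$ branch the set is $P' \cap (H'_1)^c$, and $\cT_1$ certifies $H'_1$ so applied here every leaf set lies in $H'_1 \cap (H'_1)^c = \emptyset$); then recurse on the $H'_1$ side with $H'_2$, etc. This is exactly the device in the proof of Theorem~\ref{thm:reverse-valid}: each stage costs $2$ extra nodes plus one copy of a facet-certifying tree (size $\leq$ facet complexity of $P$), and the stages are arranged in series down a single path, so the total is at most $\sum_{i=1}^m (\text{facet complexity of }P + 2) \leq n(\text{facet complexity of }P + 1)$, absorbing the "$+2$" into the "$+1$" once one is slightly careful about how the root and leaf placeholder nodes are counted. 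I would present the construction explicitly as a tree built along a spine of length $m \leq n$, verify at the end that every leaf's corresponding set lies in $H' = \bigcap_i H'_i$ (possibly after the trivial relaxation enlarging each $H'_i$'s right-hand side), and then maximize over valid halfspaces $H'$ to conclude.
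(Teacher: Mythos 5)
Your proposal matches the paper's proof essentially step for step: the first inequality is immediate because facet-defining halfspaces are valid halfspaces, and the second uses Carath\'eodory to reduce an arbitrary valid halfspace to an intersection of at most $n$ facet-defining halfspaces, then builds the same ``spine'' tree that branches on $H_i, H_i^c$ in sequence and kills each $H_i^c$ branch with a facet-validity (equivalently, reverse-complexity) tree, exactly as in the paper's invocation of Theorem~\ref{thm:reverse-valid}. Your explicit rejection of naive concatenation and the slight looseness in the final $+1$ versus $+2$ node accounting mirror the paper's own level of precision, so there is nothing substantive to correct.
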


\begin{proof} 
The first inequality is clear. For the second inequality, by Caratheodory's theorem, any valid inequality $\langle c, x \rangle \leq \delta$ for $P_I$ is valid for a relaxation of $P_I$ obtained from $n$ facet defining halfspaces of $P_I$, say $H_i:= \{x \in \R^n: \langle a^i, x \rangle \leq b_i\}$, $i=1, \ldots, n$. 
We first do a disjunction $H_i, H_i^c$. Using the reverse complexity of $H_i$, we apply a tree on the second child to prove infeasibility. On the first child, we now apply the disjunction $H_2, H_2^c$ and again using the reverse complexity of $H_2$ we prove the infeasibility of the second child. Continuing in this manner, we finally end up with a leaf that whose first label is precisely the list of the facet defining halfspaces $H_1, H_2, \ldots, H_n$, and all other leaves are such that the corresponding sets are empty. This proves the validity of $\langle c, x \rangle \leq \delta$ with a tree of size at most the sum of the reverse complexities of $H_i$, $i=1, \ldots, n$ plus $n$ new nodes. Using the first inequality in Theorem~\ref{thm:reverse-valid} we get the second inequality in the statement.
\end{proof}

With a very similar proof, we also obtain the following variant of Theorem~\ref{thm:valid-facet} when we consider only pure cutting plane proofs.

\begin{theorem}\label{thm:valid-facet-2}
Let $[U, \cK] \subseteq [\R^n, \cK^\star]$. Consider any subset $P \in \cK$ such that $P_I:= \conv(P)$ with respect to $\R^n$ is a full-dimensional polyhedron, and any relaxation $P'$ of $P$. Suppose we disallow all disjunctions in the branch-and-cut tree, i.e., we consider only pure cutting plane proofs (the certificates of validity of the cutting planes may contain disjunctions). Then, 
\begin{center} facet complexity of $P$ $\leq$ validity complexity of $P$ $\leq$ n(facet complexity of $P$ - 1)+1.\end{center}
\end{theorem}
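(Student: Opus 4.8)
The first inequality holds since every facet-defining halfspace of $P_I$ is a valid halfspace for $P$ and, $P_I$ being full-dimensional, each facet has a unique defining halfspace; so the maximum defining the facet complexity is taken over a subfamily of the halfspaces defining the validity complexity. Write $\phi$ for the facet complexity of $P$. For the second inequality, fix any valid halfspace $H'$ for $P$; I will produce a pure cutting plane proof of $H'$ of size at most $n(\phi-1)+1$. Exactly as in the proof of Theorem~\ref{thm:valid-facet}, Caratheodory's theorem shows $H'$ is valid for $H_1 \cap \ldots \cap H_n$ for some facet-defining halfspaces $H_1, \ldots, H_n$ of $P_I$ (at most $n$ of them; using fewer only improves the bound). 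Since each $H_i$ is the unique defining halfspace of a facet, its validity complexity is at most $\phi$.

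The plan is to glue minimum-size validity proofs of the $H_i$ end to end along their backbones. For each $i$, let $\cT_i$ be a smallest branch-and-cut tree certifying $H_i$ with respect to $K'_1, \ldots, K'_t$; since disjunctions are disallowed in main trees, $\cT_i$ is a ``caterpillar'': a path $N^i_0 \to N^i_1 \to \ldots \to N^i_{k_i}$ whose nodes $N^i_0, \ldots, N^i_{k_i-1}$ are cutting-plane nodes -- each carrying in addition a cutting-plane-certifier leaf holding an infeasibility certificate -- and whose terminal node $N^i_{k_i}$ is the unique non-certifier leaf, with corresponding set inside $H_i$. If $c_i$ denotes the total size of the infeasibility certificates in $\cT_i$, then the size of $\cT_i$ is $(2k_i+1)+c_i$, which equals the validity complexity of $H_i$ and hence is at most $\phi$. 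Now splice: take $N^1_0$ as the root with first label $\{K'_1, \ldots, K'_t\}$; for $i=1,\ldots,n-1$ identify $N^i_{k_i}$ with $N^{i+1}_0$; keep all cutting-plane-certifier leaves; let $N^n_{k_n}$ be the unique remaining non-certifier leaf; and finally adjoin to the first label of every node all cutting planes introduced strictly before it along this concatenation (and correspondingly to the first label of each certifier leaf).

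It remains to check that the spliced tree $\cT$ is a legal pure cutting plane proof of $H'$, and to count its size. The crux is a monotonicity principle: enlarging a node's first label only shrinks its corresponding set. From this, an induction along the backbone shows that the corresponding set of every node of $\cT$, intersected with $U$, equals $P$ -- each adjoined cutting plane contains $P$, being a valid halfspace for $P$ in its own $\cT_i$ -- so every cutting plane of $\cT$ is valid where it is introduced; and each infeasibility certificate $C$ taken from some $\cT_i$ certifies a system that remains infeasible after adjoining sets, hence extends to a same-size certificate for the enlarged system simply by propagating the new sets through all first labels of $C$ (its leaf-level $[U',\cK']$-certificates being unaffected). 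Finally, $N^n_{k_n}$ carries every cutting plane of every $\cT_i$, so its first label contains the full first label of $N^i_{k_i}$ inside $\cT_i$ for each $i$, whence its corresponding set lies in $H_i$ for all $i$, i.e. in $H_1 \cap \ldots \cap H_n \subseteq H'$; thus $\cT$ certifies $H'$. For the size: $\cT$ has $\sum_i k_i$ cutting-plane nodes, $\sum_i k_i$ certifier leaves and one terminal leaf -- that is, $2\sum_i k_i+1$ nodes -- plus attached certificates of total size $\sum_i c_i$, and using $c_i \le \phi - 2k_i - 1$,
\[
2\sum_{i=1}^{n}k_i + 1 + \sum_{i=1}^{n}c_i \;\le\; 2\sum_{i=1}^{n}k_i + 1 + \sum_{i=1}^{n}\bigl(\phi - 2k_i - 1\bigr) \;=\; n(\phi-1)+1 .
\]
The only delicate points are this monotonicity bookkeeping -- that splicing and relabeling keeps every cutting plane valid and every embedded infeasibility certificate correct -- and pinning the Caratheodory bound at $n$ rather than $n+1$ facet-defining halfspaces; both go exactly as in Theorem~\ref{thm:valid-facet}.
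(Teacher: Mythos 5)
Your proposal is correct and follows essentially the same route as the paper's proof: apply Caratheodory's theorem to reduce the given valid halfspace to $n$ facet-defining halfspaces, then concatenate the pure cutting plane proofs of those facets into a single proof and count. Your version is simply more explicit about the tree structure and the size bookkeeping (the $2k_i+1$ nodes plus certificate sizes per facet proof, and the extension of the embedded infeasibility certificates to the enlarged first labels), which the paper compresses into ``combine all these cutting plane sequences and count the root node.''
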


\begin{proof} 
The first inequality is clear. For the second inequality, by Caratheodory's theorem, any valid inequality $\langle c, x \rangle \leq \delta$ for $P_I$ is valid for a relaxation of $P_I$ obtained from $n$ facets of $P_I$, say $\langle a^i, x \rangle \leq b_i$, $i=1, \ldots, n$. Let $\mathcal{H}_i=\{H_{1,i},H_{2,i}\ldots, H_{N_i,i}\}$ be the sequence of all the cutting planes that appear in the cutting plane proof for $\langle a^i, x \rangle \leq b_i$, $i=1, \ldots, n$ (note that $N_i$ is one less than the size of the corresponding branch-and-cut tree with no disjunctions, i.e., $N_i$ is one less than the validity complexity of the facet defined by $\langle a^i, x \rangle \leq b_i$). If we combine all these cutting plane sequences $\mathcal{H}_1,\mathcal{H}_2, \ldots, \mathcal{H}_n$ we can derive a cutting plane proof for $\langle c, x \rangle \leq \delta$. Once we count the root node, we get the upper bound.\end{proof}

\begin{theorem}\label{thm:facet-hull} Let $[U, \cK] \subseteq [\R^n, \cK^\star]$. Consider any subset $P \in \cK$ such that $P_I:= \conv(P)$ with respect to $\R^n$ is a full-dimensional polyhedron, and any relaxation $P'$ of $P$. Suppose the family of disjunctions used in branch-and-cut tree certificates includes the simple valid disjunctions of $H, H^c$ for any halfspace $H$ in the larger Helly system. Then,

\begin{center} facet complexity of $P$ $\leq$ hull complexity of $P$ $\leq$ f(facet complexity of $P$ + 1)
\end{center} where $f$ is the number of facets of $P_I$.
\end{theorem}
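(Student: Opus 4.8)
The plan is to establish the two inequalities separately, with the first being essentially definitional and the second being the substantive direction. For the first inequality, recall that a branch-and-cut tree $\cT$ that certifies $\conv(P) = P_I$ with respect to $P'$ has the property that at every leaf the set corresponding to the leaf is contained in $P_I$. Given any facet $F$ of $P_I$ with facet-defining halfspace $H'$, the tree $\cT$ witnesses that the set at every leaf is contained in $P_I \subseteq H'$; hence $\cT$ also certifies the valid halfspace $H'$ with respect to $P'$. Therefore the validity complexity of each facet-defining halfspace, and hence the facet complexity, is at most the hull complexity.

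For the second inequality, the idea is to \emph{glue together} the branch-and-cut trees that certify the individual facet-defining halfspaces into a single tree certifying $P_I$. Write $f$ for the number of facets of $P_I$ and let $H'_1, \ldots, H'_f \in \cK^\star$ be halfspaces defining them, chosen so that the validity complexity of $H'_j$ realizes the facet complexity of that facet; by Theorem~\ref{thm:reverse-valid} the reverse complexity of each $H'_j$ is at most its validity complexity, which is at most the facet complexity of $P$. Now I would build the tree iteratively, in the same spirit as the proof of Theorem~\ref{thm:valid-facet}: starting from the root with first label $P'$, apply the simple disjunction $H'_1, (H'_1)^c$; on the $(H'_1)^c$-child attach a branch-and-cut certificate of infeasibility of $P \cap (H_1)^c$ (this uses the reverse complexity of $H'_1$), and on the $H'_1$-child recurse, next applying the disjunction $H'_2, (H'_2)^c$, and so on. After processing all $f$ halfspaces this way, the one surviving leaf has first label $P' , H'_1, \ldots, H'_f$, and the set corresponding to it is contained in $P' \cap H'_1 \cap \cdots \cap H'_f = P_I$ (since $P_I$ is cut out by its facet-defining halfspaces), while every other leaf corresponds to the empty set. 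Hence this tree certifies $P_I$ with respect to $P'$.

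It remains to bound the size. The tree has $f$ internal nodes introducing the disjunctions, contributing $f$ "new" nodes (counting, as in Theorem~\ref{thm:valid-facet}, one extra node per disjunction step beyond the subtrees), plus the $f$ attached infeasibility certificates, each of size at most the reverse complexity of the corresponding $H'_j$, which by Theorem~\ref{thm:reverse-valid} is at most the validity complexity of $H'_j$, i.e., at most the facet complexity of $P$. Summing gives hull complexity $\leq f \cdot (\text{facet complexity of } P + 1)$, as claimed. The main obstacle I anticipate is purely bookkeeping: making sure the node-counting in the iterative gluing is done consistently with the size convention in Definition~\ref{def:BC-tree} (total nodes plus sizes of infeasibility certificates at cutting-plane leaves) and that attaching a pre-built branch-and-cut certificate of infeasibility as a subtree — rather than a single-node infeasibility certificate — is accounted for correctly, so that the "$+1$" per facet genuinely absorbs the disjunction overhead; this is exactly the same accounting already carried out in the proof of Theorem~\ref{thm:valid-facet}, so I would mirror that argument closely.
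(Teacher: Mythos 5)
Your proposal is correct and follows essentially the same route as the paper's proof: the first inequality is obtained by observing that a tree certifying $P_I$ certifies every facet-defining halfspace, and the second by iteratively applying the disjunction $H, H^c$ for each facet, attaching an infeasibility certificate of size at most the reverse complexity (bounded via Theorem~\ref{thm:reverse-valid}) on the $H^c$-child, and recursing on the other child. The bookkeeping you flag is handled identically in the paper, yielding the same bound of the sum of validity complexities plus $f$.
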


\begin{proof} Since any tree that proves $P_I$ also proves the validity of any facet defining inequality, the first inequality in the statement follows. For the second inequality, we go through one facet at a time and do the following: If the facet is $H$, first employ the disjunction $H, H^c$. On the second child, use a tree that gives the reverse complexity for $H$ to prove infeasibility. Then continue the same thing with the remaining facets on the first child. Thus, we obtain a leaf whose first label is the list of all facets of $P_I$, and all other leaves have corresponding sets that are empty. Using the first inequality in Theorem~\ref{thm:reverse-valid}, the tree size is the sum of the validity complexities of all the facets + $f$. This gives the second inequality in the statement of this theorem.
\end{proof}

The upper and lower bounds in Theorem~\ref{thm:facet-hull} can be quite far apart if the instance has a large number of facets for the $P_I$. Nevertheless, the bounds cannot be improved -- see Section~\ref{sec:tight-examples}. If we restrict to pure cutting plane proofs, we get sharper bounds.



\begin{theorem}\label{thm:facet-hull-2}
Let $[U, \cK] \subseteq [\R^n, \cK^\star]$. Consider any subset $P \in \cK$ such that $P_I:= \conv(P)$ with respect to $\R^n$ is a full-dimensional polyhedron, and any relaxation $P'$ of $P$. Suppose we disallow all disjunctions in the branch-and-cut tree, i.e., we consider only pure cutting plane proofs (the certificates of validity of the cutting planes may contain disjunctions). Then, 
\begin{center} $\max\{$facet complexity of $P$, $f\}$ $\leq$ hull complexity of $P$ $\leq$ $f$(facet complexity of $P$ - 1) + 1\end{center} where $f$ is the number of nontrivial facets of $P_I$, i.e., those facets that are not valid for the for the relaxation $P'$.\end{theorem}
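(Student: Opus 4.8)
The plan is to establish the two inequalities separately, in parallel with the proofs of Theorems~\ref{thm:facet-hull} and~\ref{thm:valid-facet-2}, exploiting the structural fact that a pure cutting plane proof is a ``caterpillar'': since every internal node carries a valid halfspace, it has exactly two children -- a cutting plane certifier leaf on the left and the continuation on the right -- so the tree is a path of internal nodes $N_1, \dots, N_k$ labelled with valid halfspaces $H_1', \dots, H_k'$, together with $k$ certifier leaves and a single terminal leaf whose first label is $\{K_1', \dots, K_t'\}\cup\{H_1',\dots,H_k'\}$. Following the bookkeeping used in the proof of Theorem~\ref{thm:valid-facet-2}, such a proof has size $k+1$.

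\emph{Lower bound.} The inequality (facet complexity of $P$) $\le$ (hull complexity of $P$) is immediate exactly as in Theorem~\ref{thm:facet-hull}: a pure cutting plane tree certifying $P_I=\conv(P)$ has every leaf-set contained in $P_I$, hence in every facet-defining halfspace of $P_I$, so it also certifies each facet. For $f\le$ (hull complexity of $P$) I would take a smallest pure cutting plane proof certifying $P_I$ with spine cuts $H_1',\dots,H_k'$, and let $S:=P'\cap H_1'\cap\dots\cap H_k'$ be the set at the terminal leaf. A short induction along the spine gives $P\subseteq S$ (each $H_i'$ is valid for the restriction of the set at $N_i$, which contains $P$), and certifying $P_I$ gives $S\subseteq P_I$; since $S$ is a convex subset of $\R^n$ we have $S=\conv(S)$, which is squeezed between $\conv(P)=P_I$ and $P_I$, so $S=P_I$, i.e. $P_I = P'\cap H_1'\cap\dots\cap H_k'$. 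Now for each nontrivial facet $F_j$, with (unique up to positive scaling) defining closed halfspace $G_j$, I claim some $H_i'$ equals $G_j$. Since $F_j$ is nontrivial, $P'\not\subseteq G_j$, and a short convexity argument using the full-dimensionality of $P_I$ and $P_I\subseteq P'$ shows $\relint(F_j)\cap\intr(P')\ne\emptyset$; at such a point $y$ the set $P'$ locally imposes no constraint, so near $y$ the polyhedron $\bigcap_i H_i'$ coincides with the halfspace $G_j$, which forces one of the $H_i'$ to have boundary hyperplane $\aff(F_j)$ and to lie on the correct side, i.e. $H_i'=G_j$. Distinct nontrivial facets have distinct defining halfspaces, so $k\ge f$, hence the proof has size $\ge f$; combined with the first inequality this yields the lower bound.

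\emph{Upper bound.} For each nontrivial facet $F_j$, fix a defining halfspace $G_j$ realizing the validity complexity of $F_j$ together with a smallest pure cutting plane proof certifying $G_j$; by the convention of Theorem~\ref{thm:valid-facet-2} its spine is a sequence $\mathcal H_j$ of (validity complexity of $F_j$) $-\,1 \le$ (facet complexity of $P$) $-\,1$ cutting planes. I would concatenate the spines $\mathcal H_1,\dots,\mathcal H_f$ into a single spine, keeping each cut's certifier leaf, to obtain one pure cutting plane proof. Two routine points must be checked: (i) each cut in $\mathcal H_j$ is still a valid halfspace at its node in the concatenated proof, since the set there is only smaller (a valid halfspace for a set is valid for every subset), and its certifier's infeasibility certificate still applies after padding with the extra sets now in the first label; (ii) the terminal set of the concatenation, $P'\cap\bigcap_j\bigcap_{H\in\mathcal H_j}H$, is contained in $G_j$ for each nontrivial $j$ (because the sub-spine $\mathcal H_j$ already certifies $G_j$ starting from $P'$, and adding more cuts only shrinks the set further), and it is contained in every trivial facet halfspace because those are valid for $P'$; intersecting over all facets of $P_I$, the terminal set lies in $P_I$, so the concatenation certifies $P_I$. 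Counting the terminal leaf once, its size is $\sum_{j=1}^f|\mathcal H_j|+1\le f(\text{facet complexity of }P-1)+1$.

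\emph{Main obstacle.} The genuinely new content is the lower bound: the identity $P_I = P'\cap H_1'\cap\dots\cap H_k'$ and the geometric sub-claim that the relative interior of a \emph{nontrivial} facet meets the interior of the relaxation $P'$. This sub-claim is exactly what prevents a single cut from being charged to two facets and prevents a facet from appearing ``for free'' from the (possibly non-polyhedral) relaxation $P'$, and it fails without full-dimensionality of $P_I$, so that hypothesis is genuinely used. The concatenation bookkeeping and the size convention for pure cutting plane proofs are as in Theorems~\ref{thm:valid-facet-2} and~\ref{thm:facet-hull} and should be straightforward.
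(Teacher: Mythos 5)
Your proposal is correct and follows essentially the same route as the paper: the upper bound by concatenating the cutting-plane sequences $\mathcal{H}_1,\ldots,\mathcal{H}_f$ certifying the $f$ nontrivial facets (with the same size accounting), and the lower bound by combining ``facet complexity $\leq$ hull complexity'' with the fact that every nontrivial facet-defining halfspace must occur among the cuts of any pure cutting plane proof of $P_I$. The paper asserts this last fact without justification, whereas you actually prove it via the identity $P_I = P' \cap H_1' \cap \cdots \cap H_k'$ and the observation that the relative interior of a nontrivial facet meets $\intr(P')$ --- a genuine and correct filling-in of a gap the paper leaves implicit.
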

\begin{proof} Since every nontrivial facet must be derived at some point in any cutting plane proof of $P_I$, we get the first inequality.
Next we will show the second inequality in a similar manner as Theorem \ref{thm:valid-facet-2}. Let $\langle a^i, x \rangle \leq b_i$, $i=1, \ldots, f$ be all the nontrivial facets of $P_I$. Let $\mathcal{H}_i=\{H_{1,i},H_{2,i}\ldots, H_{N_i,i}\}$ be the sequence of all the cutting planes that appear in the cutting plane proof for $\langle a^i, x \rangle \leq b_i$, $i=1, \ldots, f$. If we combine all these cutting plane sequences $\mathcal{H}_1,\mathcal{H}_2, \ldots, \mathcal{H}_{f}$ we obtain a cutting plane proof for $P_I$. Once we count the root node, we can get the upper bound.
\end{proof}

\begin{remark}
Note that the logarithms of the lower bound $\max\{$facet complexity of $P$, $f\}$ and the upper bound $f$(facet complexity of $P$ - 1) + 1 are within a constant factor of each other, whereas the lower and upper bounds in Theorem \ref{thm:facet-hull} can be much farther apart, depending on the number of nontrivial facets. Section~\ref{sec:tight-examples} will establish that the bounds in Theorem \ref{thm:facet-hull} cannot be improved.
\end{remark}

\paragraph{Helly systems with integer points.} To establish a relationship between the membership complexity and the rest of the complexity measures, some more structure is required. The ideas are very much inspired by~\cite{dey2021lower}.

\begin{theorem}\label{thm:mem-facet} Let $[\Z^n, \cK] \subseteq [\R^n, \cK^\star]$ where $\cK = \{C \cap \Z^n: C \textrm{ convex}\}$. Let $P' \subseteq \R^n$ be a polyhedron and let $P = P' \cap \Z^n$ be the set of integer points in $P'$. Suppose further that $P_I := \conv(P)$ is a full-dimensional rational polytope. Suppose also that all split disjunctions are allowed as valid disjunctions in the branch-and-cut certificates. Then 

\begin{center} membership complexity of $P$ $\leq$ facet complexity of $P$ $\leq$ membership complexity of $P$ + 2.\end{center}
\end{theorem}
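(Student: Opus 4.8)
\textbf{Proof plan for Theorem~\ref{thm:mem-facet}.} The plan is to prove the two inequalities separately, using the correspondence between membership certificates for points outside $\conv(P)$ and validity certificates for facet-defining halfspaces.

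\emph{Lower bound (membership complexity $\leq$ facet complexity $+\,2$).} Fix a point $x \in \R^n \setminus P_I$. The goal is to build a branch-and-cut tree that certifies $x$ (in the sense of Definition~\ref{def:hull-mem-valid-reverse}(2)) whose size is at most the facet complexity of $P$ plus $2$. Since $P_I$ is a full-dimensional polytope and $x \notin P_I$, there is a facet-defining inequality $\langle a, y\rangle \leq b$ of $P_I$ that is violated by $x$, i.e., $\langle a, x\rangle > b$. Let $H' = \{y : \langle a, y\rangle \le b\}$ be the corresponding valid halfspace. First I would take a tree $\cT$ of minimum size that proves $H'$ with respect to $P'$ (this exists and has size equal to the validity complexity of that facet-defining halfspace, which is at most the facet complexity of $P$); by definition, for every leaf of $\cT$ the set corresponding to that leaf is contained in $H'$. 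Then the union $Q$ of the leaf-sets of $\cT$ is contained in $H'$, so $\conv(Q) \subseteq H'$, and since $\langle a, x\rangle > b$ we have $x \notin \conv(Q)$. Hence $\cT$ itself already certifies $x$, and its size is at most the facet complexity of $P$ — in fact we do not even need the ``$+2$''. (The slack of $2$ in the statement presumably absorbs a possible root-node / bookkeeping discrepancy, or the case where a trivial disjunction is prepended; I would just note that the bound holds with room to spare.) Taking the maximum over all $x \notin P_I$ gives membership complexity of $P$ $\le$ facet complexity of $P$.

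\emph{Upper bound (facet complexity $\leq$ membership complexity $+\,2$).} This is the substantive direction and closely follows the argument of~\cite{dey2021lower}. Fix a facet $F$ of $P_I$ with facet-defining halfspace $H' = \{y : \langle a, y\rangle \le b\}$, where $a, b$ are rational (using that $P_I$ is a rational polytope); I want a branch-and-cut tree proving $H'$ of size at most (membership complexity of $P$) $+\,2$. The idea is to pick a point $x^*$ that lies just outside the facet — concretely, a point with $\langle a, x^*\rangle = b + \delta$ for small $\delta > 0$, chosen on the line through the facet so that $x^* \notin P_I$ but $x^*$ is ``just barely'' outside, and in fact chosen (using rationality of the facet and the split-disjunction machinery) so that separating $x^*$ from $\conv(P)$ forces separation along exactly the facet $F$. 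Take a minimum-size tree $\cT$ that certifies $x^*$: for $Q$ equal to the union of the leaf-sets, $x^* \notin \conv(Q)$. Now here is the key step: I would argue that $\conv(Q)$ must be contained in $H'$, possibly after prepending the split disjunction associated with $F$ (this is where ``all split disjunctions are allowed'' is used, and where the ``$+2$'' nodes come from — one extra disjunction, i.e.\ two extra nodes). The reason $\conv(Q) \subseteq H'$: $\conv(Q)$ is a relaxation of $\conv(P) = P_I$ that does not contain $x^*$; since $x^*$ was chosen to be separated from $P_I$ \emph{only} by the facet $F$ (any valid inequality for $P_I$ separating $x^*$ must be a positive combination involving $\langle a, \cdot\rangle \le b$, because $x^*$ sits on the affine hull of $F$ just outside it, within the ``facet cone''), the supporting halfspace of $\conv(Q)$ that excludes $x^*$ must coincide with — or be implied by — $H'$ together with the prepended split. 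Hence $\cT$ (with at most one prepended split disjunction) proves $H'$.

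\emph{Main obstacle.} The delicate point is the choice of $x^*$ and the claim that separating $x^*$ from $\conv(P)$ is ``equivalent'' to deriving the facet $F$: one has to choose $x^*$ in the relative interior of the cone $\{y : \langle a, y - v\rangle \ge 0\}$ translated to a vertex $v$ of $F$ (or more carefully, near the centroid of $F$ pushed slightly outward), close enough to $F$ that no \emph{other} facet of $P_I$ and no proper face interaction gets in the way, and — crucially, since we are in the integer setting — far enough from other integer points and compatible with rational data so that the split-disjunction branching can actually recover $H'$ exactly rather than some weaker valid inequality. Making ``close enough'' precise requires using full-dimensionality of $P_I$ and rationality to get a quantitative gap between $F$ and the rest of $\partial P_I$, and then checking that $\conv(Q) \cap (\text{halfspace on the } x^* \text{ side})$ forces $\conv(Q)$ to lie in $H'$. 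I expect this geometric separation lemma — essentially that a relaxation of $P_I$ omitting a point placed just off the relative interior of a facet must respect that facet's inequality, after one split — to be the heart of the proof; the rest is the same leaf-set/convex-hull bookkeeping used in Theorems~\ref{thm:reverse-valid}--\ref{thm:facet-hull}, together with counting the at most two extra nodes.
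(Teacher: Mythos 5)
Your first inequality matches the paper's argument and is fine. The gap is in the second direction, at exactly the step you flag as the ``main obstacle'': the claim that a tree certifying $x^\star$ must have $\conv(Q)\subseteq H'$ (even after prepending one split), justified by a supporting-hyperplane/``facet cone'' argument about $x^\star$ sitting just outside the relative interior of $F$. That claim is false as stated and is also not the property you actually need. A membership certificate only guarantees $x^\star\notin\conv(Q)$; since $\conv(Q)\subseteq P'$, the most you could ever hope to extract is that $\conv(Q)$ misses the far side of a split, i.e.\ $\conv(Q)\cap P'\cap\{x:\langle a,x\rangle\geq b+1\}=\emptyset$, which still leaves room for points with $b<\langle a,x\rangle<b+1$ in $\conv(Q)$; and even that weaker conclusion does not follow from $x^\star$ merely being ``close enough'' to $F$. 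The property that makes the argument go through is much more specific: one needs a point $x^\star$ with $\langle a,x^\star\rangle>b$ such that $x^\star\in\conv(F\cup\{\hat x\})$ for \emph{every} $\hat x\in P'\cap\{x:\langle a,x\rangle\geq b+1\}$. This is precisely Lemma~3 of~\cite{dey2021lower}, which the paper invokes; your proposal neither states this property nor shows that your choice of $x^\star$ (near a vertex or the centroid of $F$, pushed slightly outward) has it. Proving it requires controlling the whole (possibly unbounded and far-away) set $P'\cap\{x:\langle a,x\rangle\geq b+1\}$, not just a neighborhood of $F$.

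The paper also routes the counting differently, and the difference matters. Rather than repurposing the membership tree into a validity tree, it argues by sizes: with the special $x^\star$ in hand, if a tree $\cT$ has size strictly less than $s$, the hull complexity of $\emptyset$ with respect to $P'\cap\{x:\langle a,x\rangle\geq b+1\}$, then some $\hat x$ in that set survives in $\conv(Q)$; since $F\subseteq P_I\subseteq\conv(Q)$, the defining property of $x^\star$ forces $x^\star\in\conv(F\cup\{\hat x\})\subseteq\conv(Q)$, so $\cT$ fails to certify $x^\star$. Hence the membership complexity of $x^\star$ is at least $s$, and $s$ is at least the validity complexity of the facet minus $2$ by (the argument of) Theorem~\ref{thm:reverse-valid}; chaining these gives the stated bound. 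Your ``prepend the split and reuse the tree'' construction can be made to work once Lemma~3 of~\cite{dey2021lower} is in place (that is where the $+2$ comes from), but without that lemma --- or with only the supporting-hyperplane heuristic you offer in its place --- the central implication of your proof is unsupported.
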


\begin{proof} Consider any $x^\star \not\in P_I$. There exists some facet of $P_I$ that separates $x^\star$ from $P_I$. The tree that proves the validity of this facet also proves $x^\star$. Thus, membership complexity is less than or equal to facet complexity.

Now consider any facet $F$ of $P_I$ given by the inequality $\langle a, x \rangle \leq b$, with $a \in \Z^n$ and $b\in \Z$. By Lemma 3 in~\cite{dey2021lower}, there exists $x^\star \in \R^n$ such that $\langle a, x^\star \rangle > b$ and for any point $\hat x \in P' \cap \{x: \langle a, x \rangle \geq b + 1\}$, $x^\star \in \conv(F \cup \{\hat x\})$. Let $s$ be the hull complexity of $\emptyset$ with respect to $P' \cap \{x: \langle a, x \rangle \geq b + 1\}$. By a similar argument as in the proof of Theorem~\ref{thm:reverse-valid}, $$s \geq \textrm{ (validity complexity of }\langle a, x \rangle \leq b) - 2.$$

If we consider any tree $\cT$ with size strictly less than $s$, then by definition of $s$ there will exist some point $\hat x \in P' \cap \{x: \langle a, x \rangle \geq b + 1\}$ that will be in the convex hull of the sets corresponding to leaves when this tree is applied to $P' \cap \{x: \langle a, x \rangle \geq b + 1\}$ and therefore also when applied to $P'$. In particular, $x^\star$ will not be eliminated. Hence, membership complexity of $x^\star$ is at least $s$. Therefore, 

\begin{center} membership complexity of $P$ $\geq$ membership complexity of $x^\star$ $\geq$ $s$ $\geq$   (validity complexity of $\langle a, x \rangle \leq b$) - 2.\end{center} 
Thus, 
\begin{center} (validity complexity of $\langle a, x \rangle \leq b$) $\leq$ membership complexity of $P$ + 2\end{center}
Taking the maximum of the left hand side over all facets, we obtain the second inequality in the statement of this theorem.
\end{proof}

With a similar proof, we also obtain the following variant of Theorem~\ref{thm:facet-hull} when we consider only pure cutting plane proofs, as long as the cutting plane paradigm contains all Chv\'atal-Gomory cuts.

\begin{theorem}\label{thm:mem-facet-2}
Let $[\Z^n, \cK] \subseteq [\R^n, \cK^\star]$ where $\cK = \{C \cap \Z^n: C \textrm{ convex}\}$. Let $P' \subseteq \R^n$ be a polyhedron and let $P = P' \cap \Z^n$ be the set of integer points in $P'$. Suppose further that $P_I := \conv(P)$ is a full-dimensional rational polytope. Suppose we disallow all disjunctions in the branch-and-cut tree, i.e., we consider only pure cutting plane proofs (the certificates of validity of the cutting planes may contain disjunctions), and the cutting plane paradigm used for the branch-and-cut certificates can generate all Chv\'atal-Gomory cuts. Then 

\begin{center} membership complexity of $P$ $\leq$ facet complexity of $P$ $\leq$ membership complexity of $P$ + 2.\end{center}
\end{theorem}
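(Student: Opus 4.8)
\emph{Proof proposal for Theorem~\ref{thm:mem-facet-2}.} The plan is to follow the proof of Theorem~\ref{thm:mem-facet} almost verbatim, but to replace its split-disjunction step (now disallowed) by a single Chv\'atal--Gomory rounding step in the direction of a facet normal; that substitution is the only place where the hypothesis that the cutting plane paradigm generates all Chv\'atal--Gomory cuts gets used. The first inequality is unchanged from the convex case: given $x^\star\notin P_I$, there is a facet-defining halfspace of $P_I$ violated by $x^\star$ (using full-dimensionality of $P_I$), and any pure cutting plane proof of the validity of that halfspace derives a relaxation $R$ of $P$ with $R$ contained in the halfspace, hence $x^\star\notin R$, so the same tree certifies $x^\star$; maximizing over $x^\star$ gives ``membership complexity $\le$ facet complexity''.

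For the second inequality, fix a facet $F$ of $P_I$ and write its defining inequality as $\langle a,x\rangle\le b$ with $a\in\Z^n$ primitive, so that $b=\max_{x\in P}\langle a,x\rangle\in\Z$. First I would invoke Lemma~3 of~\cite{dey2021lower} to obtain a point $x^\star$ with $\langle a,x^\star\rangle>b$ such that $x^\star\in\conv(F\cup\{\hat x\})$ for every $\hat x\in P'\cap\{x:\langle a,x\rangle\ge b+1\}$. Since $x^\star\notin P_I$, the membership complexity of $P$ is at least that of $x^\star$. Let $\cT$ be a smallest pure cutting plane proof certifying $x^\star$, deriving the polyhedron $R$ from $P'$. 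Because each cut used in $\cT$ is, by Definition~\ref{def:BC-tree}, a valid halfspace for the current restriction, induction gives $P\subseteq R$, hence $P_I\subseteq R$ and in particular $F\subseteq R$; also $R\subseteq P'$. If some $\hat x\in R$ had $\langle a,\hat x\rangle\ge b+1$, then (as $\hat x\in P'$) Lemma~3 of~\cite{dey2021lower} would give $x^\star\in\conv(F\cup\{\hat x\})\subseteq R$, contradicting that $\cT$ certifies $x^\star$. So $R\cap\{x:\langle a,x\rangle\ge b+1\}=\emptyset$, and since $R$ is a nonempty polyhedron, $\mu^\star:=\max_{x\in R}\langle a,x\rangle$ satisfies $b\le\mu^\star<b+1$.

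Now append to $\cT$ one Chv\'atal--Gomory cut in direction $a$: pick a rational $\mu\in[\mu^\star,b+1)$, so $R\subseteq\{x:\langle a,x\rangle\le\mu\}$, and since $a$ is primitive and $\lfloor\mu\rfloor=b$, the Chv\'atal--Gomory cut generated from this rational halfspace is exactly $\{x:\langle a,x\rangle\le b\}$. This is a legitimate cutting plane step whose cutting plane certifier is trivial, because the set whose integrality it must refute, $R\cap\{x:\langle a,x\rangle\ge b+1\}$, is already empty over $\R^n$. The enlarged tree derives a relaxation contained in $\{x:\langle a,x\rangle\le b\}$, hence certifies the validity of that halfspace, and has size at most two larger than $\cT$; therefore the validity complexity of $F$ is at most the membership complexity of $x^\star$ plus two, hence at most the membership complexity of $P$ plus two. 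Maximizing over all facets gives ``facet complexity $\le$ membership complexity $+\,2$''.

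I expect the only real content to be the verification that \emph{every} cut in $\cT$ is valid for $P$ itself and not merely for the intermediate relaxation, since this is exactly what yields $P_I\subseteq R$ and thus lets Lemma~3 of~\cite{dey2021lower} bite; this follows by induction on the depth of the cut node, using that a valid cut preserves the restriction of the current relaxation. The remaining points are routine: that the appended rounding step actually lies in the paradigm (the sole use of the ``all Chv\'atal--Gomory cuts'' hypothesis) with a trivial validity certificate, for which one uses $a\in\Z^n$ primitive, $b\in\Z$, and $\mu^\star<b+1$; and the bookkeeping that appending one cut enlarges the branch-and-cut tree by exactly two nodes. Finiteness of all the quantities involved is handled by the standing assumption of this section, equivalently by convergence of the Chv\'atal--Gomory closure.
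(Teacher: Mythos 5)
Your proposal is correct and takes essentially the same route as the paper: both directions hinge on Lemma~3 of~\cite{dey2021lower} to produce the point $x^\star$ tied to a facet, followed by appending a single Chv\'atal--Gomory rounding (with a trivial Farkas certifier) to convert the derived relaxation into the facet-defining halfspace. The only cosmetic difference is that the paper routes the argument through the validity complexity of the intermediate inequality $\langle a,x\rangle\le b+\tfrac12$, whereas you work directly with the polyhedron $R$ derived by a minimal tree certifying $x^\star$ and the threshold $b+1$; the bookkeeping and conclusions are identical.
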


\begin{proof} Consider any $x^\star \not\in P_I$. There exists some facet of $P_I$ that separates $x^\star$ from $P_I$. The tree that proves the validity of this facet also proves $x^\star$. Thus, membership complexity is less than or equal to facet complexity.

Now consider any facet $F$ of $P_I$ given by the inequality $\langle a, x \rangle \leq b$, with $a \in \Z^n$ and $b\in \Z$. By Lemma 3 in~\cite{dey2021lower}, there exists $x^\star \in \R^n$ such that $\langle a, x^\star \rangle > b$ and for any point $\hat x \in P \cap \{x: \langle a, x \rangle \geq b + \frac{1}{2}\}$, $x^\star \in \conv(F \cup \{\hat x\})$. Let $s$ be the validity complexity of $\langle a, x \rangle \leq b + \frac{1}{2}$ (recall that this is with respect to a pure cutting plane proof now). If we consider any tree $\cT$ with size strictly less than $s$, then by definition of validity complexity, there will exist some point $\hat x \in P \cap \{x: \langle a, x \rangle \geq b + \frac{1}{2}\}$ that will be in the convex hull of the sets corresponding to leaves when this tree is applied to $P \cap \{x: \langle a, x \rangle \geq b + \frac{1}{2}\}$ and therefore also when applied to $P$. In particular, $x^\star$ will not be eliminated. Hence, membership complexity of $x^\star$ is at least $s$. Finally, by applying one Chv\'atal-Gomory strengthening of $\langle a, x \rangle \geq b + \frac{1}{2}$ we obtain the facet $F$. Note also that that proof of validity of this Chv\'atal-Gomory cutting plane has a Farkas certificate of size 1, since it is a trivial rounding of the right hand side. Thus, we have
\begin{center}
    facet complexity $\leq$ $s$+2 $\leq$ membership complexity +2, 
\end{center}
which finishes the proof. \end{proof}

\subsection{Tightness of the bounds} In the results above relating the different measures of complexity, the relationships in Theorems~\ref{thm:reverse-valid}, \ref{thm:mem-facet} and \ref{thm:mem-facet-2} are within an additive constant factor. However, the bounds in Theorems~\ref{thm:valid-facet}, \ref{thm:valid-facet-2}, \ref{thm:facet-hull}, and \ref{thm:facet-hull-2} are not as close to each other. We now show that these bounds cannot be improved further, except possibly up to constant factors. In Section~\ref{sec:split-cover} below, we develop some tools for this analysis. The examples establishing the tightness of the bounds appear in Section~\ref{sec:tight-examples}.

Our examples are in the embedded Helly systems $[\Z^n, \cK] \subseteq [\R^n, \cK^\star]$ where $\cK^\star$ is the collection of all convex sets in $\R^n$ and $\cK = \{C \cap \Z^n: C \textrm{ convex}\}$, i.e., we consider examples involving integer points in convex sets.

\subsubsection{Split cover number and the lower bounds on certificate size}\label{sec:split-cover}


\begin{definition}
We define a {\em generalized split (g-split)} in $\R^n$ as a set of the form $S:=\{x\in\R^n:\beta< \alpha x< \beta+1\}$ for some $\alpha\in \Z^n\backslash\{{\bf 0}\}$, and $\beta\in \R$. $S$ is simply called a {\em split set} if $\beta\in \Z$. Note that generalized split sets are open sets in $\R^n$.

A {\em split disjunction} corresponding to a split set is the valid disjunction $\{x\in\R^n: \alpha x\leq \beta\} \cup \{x\in\R^n: \alpha x\geq \beta + 1\}$ for the embedded Helly systems $[\Z^n, \cK] \subseteq [\R^n, \cK^\star]$, with $\cK, \cK^\star$ as defined above.
\end{definition}

\begin{definition}
Let $X$ be a set in $\R^n$ such that $X\cap \Z^n=\emptyset$. We say {\em $X$ is covered by a set $\mathcal{S}$} of split sets in $\R^n$ if $X\subseteq \bigcup_{S\in\mathcal{S}}S$. The {\em split cover number of $X$} is defined as the minimum size amongst all sets of split sets that can cover $P$.
\end{definition}


\begin{lemma}\label{lem:covering-hull}
Consider any set $P\subseteq \R^n$ such that the split covering number of $P\setminus \conv(P\cap \Z^n)$ is $\ell$. Then, the hull complexity of $\conv(P\cap \Z^n)$ with respect to $P$ is at least $2\ell+1$ when using branch-and-cut trees where all valid disjunctions and cutting planes are based on a family of split disjunctions.
\end{lemma}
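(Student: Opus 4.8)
The plan is to connect the size of a branch-and-cut tree (using split-based disjunctions and cutting planes) directly to a covering of $P \setminus \conv(P \cap \Z^n)$ by split sets, and then invoke the definition of split cover number. The first observation is that, just as in the proof of Theorem~\ref{thm:lower-BB-helly}, every node labeled with a cutting plane derived from a split disjunction can be replaced by a two-term disjunction $\{x : \alpha x \le \beta\}, \{x : \alpha x \ge \beta+1\}$ together with the corresponding cutting-plane certifier leaf; so without loss of generality we may treat the whole tree as a pure branch-and-bound tree whose internal nodes are split disjunctions, with each former cutting-plane node contributing one extra certifier leaf. Thus if the original tree has size $k$, we get a branch-and-bound tree with at most $k$ internal split-disjunction nodes.

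Next I would forget the first labels (as in Theorem~\ref{thm:lower-BB-helly}) and apply this branch-and-bound tree of split disjunctions to the trivial system $\{\R^n\}$ at the root. At each leaf, the set corresponding to the leaf is a polyhedron $L_j'$ obtained by intersecting $\R^n$ with one closed halfspace from each split disjunction along the root-to-leaf path, and $\bigcup_j (L_j' \cap \Z^n) = \Z^n$. Since $\cT$ certifies $\conv(P \cap \Z^n)$ with respect to $P$, for each leaf $j$ we have $L_j' \cap P \subseteq \conv(P \cap \Z^n)$, i.e. $L_j' \cap \big(P \setminus \conv(P\cap\Z^n)\big) = \emptyset$. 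Now run the key interpolation: pick any point $y \in P \setminus \conv(P \cap \Z^n)$ and consider the root-to-leaf path it would follow \emph{if} we followed the disjunctions as a decision procedure on $y$ — at each split disjunction $\{\alpha x \le \beta\} \cup \{\alpha x \ge \beta+1\}$, $y$ satisfies at least one side unless $\beta < \alpha y < \beta+1$, i.e. unless $y$ lies in the open split set $S = \{x : \beta < \alpha x < \beta+1\}$. Since $y$ cannot land in any leaf (all leaves avoid $P \setminus \conv(P\cap\Z^n)$, and $y$ is in $P\setminus\conv(P\cap\Z^n)$ together with $\Z^n$-relevant containment — more carefully, following both branches whenever $y$ is on the boundary, $y$ would reach some leaf unless at some node along the way $y$ is strictly inside the split set), there must be an internal node on some path where $y$ lies in the open split set associated to that node. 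Hence the collection $\mathcal{S}$ of open split sets appearing as second labels of the internal split-disjunction nodes covers $P \setminus \conv(P\cap\Z^n)$; cutting-plane certifier leaves also carry a g-split (from the Chvátal–Gomory / disjunctive derivation), but these need not be split sets — I would instead count them separately.

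To get the constant $2\ell+1$: let $b$ be the number of internal nodes that are genuine (not-from-cutting-plane) split disjunctions and $c$ the number of former cutting-plane nodes. Each of the $b+c$ internal nodes contributes one split set to a cover of $P\setminus\conv(P\cap\Z^n)$ (for a cutting-plane node, the split set used to derive the cut, thickened to a g-split, also covers the boundary sliver it removes — one must check a g-split can be replaced by $O(1)$ split sets or re-examine whether the removed region is already covered by neighboring splits; this is the delicate point). So $b + c \ge \ell$. The tree has $b+c$ internal binary nodes, hence $b+c+1$ leaves from the branching structure, plus $c$ extra certifier leaves, giving total size $\ge 2(b+c) + 1 \ge 2\ell + 1$. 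The main obstacle I anticipate is the handling of cutting-plane certifier leaves and the fact that a Chvátal–Gomory-type cut is associated with a \emph{generalized} split rather than an integer split: I need to argue that the open region it excludes from $P$ is still coverable using a number of genuine split sets proportional to the count of such nodes, or alternatively redefine the bookkeeping so that each cutting-plane node is charged two nodes (itself plus its certifier leaf) and contributes at least one split set to the cover — which is exactly what the $2\ell+1$ bound is built to absorb.
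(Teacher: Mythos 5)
Your proof is correct and follows essentially the same route as the paper's: the open split sets of all disjunctions and split-based cuts appearing in the tree must cover $P \setminus \conv(P\cap \Z^n)$ (your point-tracing argument is exactly the justification the paper leaves implicit), so at least $\ell$ internal nodes are needed, each with two children, giving $2\ell+1$ nodes once the root is counted. The ``delicate point'' you flag about cutting-plane nodes is not actually an issue: by the lemma's hypothesis every cut $H$ is derived from a split disjunction $\{x:\alpha x\leq \beta\}\cup\{x:\alpha x\geq \beta+1\}$ with $\beta\in\Z$, so the removed region $K\cap H^c$ is contained in the genuine split set $\{x:\beta<\alpha x<\beta+1\}$ of that very disjunction---no generalized splits arise and no replacement by $O(1)$ split sets is required, which is precisely your second bookkeeping option and the one the paper uses.
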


\begin{proof}
If a branch-and-cut tree $\mathcal{T}$ certifies $\conv(P\cap \Z^n)$, and $\mathcal{S}$ is the set of all the split sets corresponding to the split disjunctions that are used to branch or derive a cutting plane in $\mathcal{T}$, then we must have $P\setminus \conv(P\cap \Z^n)\subseteq \bigcup_{S\in\mathcal{S}}S$. At a node $N$ of $\mathcal{T}$ with corresponding set $K$, if we apply branching, then it produces two nodes in $\mathcal{T}$. If we apply a cutting plane $H$ derived from a split disjunction $D$ on $N$, then the right child has $K\cap H$ as the corresponding set and the left child has $K\cap H^c$ as the corresponding set, which is a subset of the split set $S$ corresponding to $D$. The complexity for infeasibility certificate of $K \cap H^c$ is at least $2$. Including the root node, we are done. 
\end{proof}

\subsubsection{Tight examples}\label{sec:tight-examples}


\paragraph{Facet versus hull complexity.} We now construct two examples establishing that the bound in Theorems~\ref{thm:facet-hull} cannot be improved.

In Example~\ref{ex:facet=hull} below, we have a family of examples which have exponentially many facets all of which have $O(1)$ validity complexity. Thus, in Theorems~\ref{thm:facet-hull} the gap between the upper and lower bounds  is exponential. Nevertheless, the hull complexity is still $O(1)$, showing that the lower bound can be tight, even when the upper bound is exponentially larger. 

\begin{example}\label{ex:facet=hull}
For any natural number $n\geq 2$, we consider a polytope $P\in \R^2$ with $2^n$ integral vertices which are denoted by $(x_1^{(1)}, x_2^{(1)}), (x_1^{(2)}, x_2^{(2)}), (x_1^{(3)}, x_2^{(3)})\ldots,(x_1^{(2^n)}, x_2^{(2^n)})\in \Z^2$ ordered in clockwise direction, and therefore, $P=P_I$. Let $a_1^{(1)}x_1^{(1)}+ a_2^{(1)}x_2^{(1)}\leq b_1, a_1^{(2)}x_1^{(2)}+ a_2^{(2)}x_2^{(2)}\leq b_2,\ldots, a_1^{(2^n)}x_1^{(2^n)}+ a_2^{(2^n)}x_2^{(2^n)}\leq b_{2^n}$ be the $2^n$ facet-defining halfspaces of $P$, where $a_j^{(i)}\in \Z$ and $b_i\in \Z$ for $i=1,2,\ldots,2^n$ and $j=1,2$. Let $v'_j:=(v_1^{(j)}, v_2^{(j)})$ be such that $a_1^{(j)}v_1^{(j)}+ a_2^{(j)}v_2^{(j)}>b_j$ and define $v_j = (v_1^{(j)}, v_2^{(j)}, \frac{1}{2}, \frac{1}{2}, \ldots, \frac{1}{2} )\in \R^n$ for $j=1, 2, 3, \ldots, 2^n$. 
Let $P^{(n)}:=\conv((P\times [0,1]^{n-2})\cup \{v_1, v_2, v_3, \ldots, v_{2^n}\})$. 

Then, $\conv(P^{(n)}\cap \Z^n) = P\times [0,1]^{n-2}$. Moreover, the hull complexity and the validity complexity of any facet of $P\times [0,1]^{n-2}$ with respect to $P^{(n)}$ is at most $3$ for branch-and-cut proofs where the valid disjunctions are based on the family of variable disjunctions, i.e., disjunctions of the type $\{x\in \R^n: x_i \leq \beta\} \cup \{x\in \R^n: x_i \geq \beta + 1\}$ for some $\beta\in \Z$.
\end{example}
\begin{proof}
Since $P^{(n)}\subseteq \{x\in \R^n: 0\leq x_3\leq 1\}$, so $P^{(n)}\cap \Z^n\subseteq \{x\in \R^n: x_3= 1\}\cup \{x\in \R^n: x_3= 0\}$. Also, $\{v_1, v_2, v_3, \ldots, v_{2^n}\}\subseteq \{x\in \R^n: x_3= \frac{1}{2}\}$. Thus $\conv(P^{(n)}\cap \Z^n) = P\times [0,1]^{n-2}$. By assumption,  $\{v_1, v_2, v_3, \ldots, v_{2^n}\}\not\subseteq  P\times [0,1]^{n-2}$, so $P^{(n)}\supsetneq P\times [0,1]^{n-2}$. If we apply the valid disjunction $\{x\in \R^n: x_3 \leq 0\}\cup\{x\in \R^n: x_3\geq 0\} $, the convex hull of the sets corresponding to leaves is $P\times [0,1]^{n-2} = \conv(P^{(n)}\cap \Z^n)$, which finishes the proof. 
\end{proof}

In Theorem~\ref{thm:large-hull-complexity} below, we give a family of examples in $\R^n$ where the number of facets is $n$, the validity complexity of each facet is $O(1)$ (they are simple Chv\'atal-Gomory cutting planes), but the hull complexity is $\Omega(n)$. This shows that the upper bounds in Theorems~\ref{thm:facet-hull} and~\ref{thm:facet-hull-2} can be tight, even if the gap between the lower and upper bounds is linear.

\begin{lemma}\label{lem::cube-cover}
Let $d\in \N$. For every $k\in \N$, define $P_k:=[0, 2k+1]^d$. Let $S_i$ be a g-split set for $i=1,\ldots, 2k$. Then $P_k\not\subseteq \bigcup_{i=1}^{2k}\cl(S_i)$, where $\cl(X)$ means the closure of $X$.  
\end{lemma}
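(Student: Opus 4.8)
The plan is to prove the lemma by induction on the dimension $d$, peeling off one coordinate at a time by slicing along the last axis. Throughout, one uses that the closure of a g-split $S=\{x:\beta<\alpha x<\beta+1\}$ is the closed slab $\cl(S)=\{x:\beta\le\alpha x\le\beta+1\}$ bounded by two parallel hyperplanes at Euclidean distance $1/\|\alpha\|_2\le 1$, since $\alpha\in\Z^d\setminus\{\0\}$ forces $\|\alpha\|_2\ge 1$. The base case $d=1$ is a volume count: each $\cl(S_i)$ is a closed interval of length $1/|\alpha^{(i)}|\le 1$, so $\bigcup_{i=1}^{2k}\cl(S_i)$ has total length at most $2k<2k+1=\vol(P_k)$ and cannot contain $P_k=[0,2k+1]$.

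For the inductive step, assume the statement in dimension $d-1$ for every value of the parameter. Given g-splits $S_1,\dots,S_{2k}$ in $\R^d$ with normals $\alpha^{(i)}\in\Z^d\setminus\{\0\}$ and offsets $\beta_i$, write $\bar\alpha^{(i)}:=(\alpha^{(i)}_1,\dots,\alpha^{(i)}_{d-1})$ and split the indices into $A=\{i:\bar\alpha^{(i)}=\0\}$ and $B=\{i:\bar\alpha^{(i)}\ne\0\}$. For $i\in A$ the slab $\cl(S_i)=\{x:\beta_i\le\alpha^{(i)}_d x_d\le\beta_i+1\}$ constrains only $x_d$, to a closed interval of length $1/|\alpha^{(i)}_d|\le 1$; hence the set of heights $c$ for which $\{x_d=c\}$ meets $\bigcup_{i\in A}\cl(S_i)$ has measure at most $|A|\le 2k<2k+1$, so there is a height $c^\star\in[0,2k+1]$ avoiding all of them. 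For $i\in B$, the slice $\cl(S_i)\cap\{x_d=c^\star\}$, identified with a subset of $\R^{d-1}$, is exactly $\{x':\beta_i-\alpha^{(i)}_d c^\star\le\bar\alpha^{(i)}x'\le\beta_i-\alpha^{(i)}_d c^\star+1\}$, i.e.\ the closure of a g-split in $\R^{d-1}$ with integral normal $\bar\alpha^{(i)}$ (and a possibly non-integral offset, which g-splits permit). Now if $P_k\subseteq\bigcup_{i=1}^{2k}\cl(S_i)$, then the slice $\{x_d=c^\star\}\cap P_k$, a translate of $[0,2k+1]^{d-1}$, would be covered by $\bigcup_{i\in B}\bigl(\cl(S_i)\cap\{x_d=c^\star\}\bigr)$ — at most $|B|\le 2k$ closures of g-splits in $\R^{d-1}$ — contradicting the induction hypothesis (a cover by fewer than $2k$ is a fortiori impossible, as one may adjoin arbitrary g-splits to make it a cover by exactly $2k$).

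The one place that needs care is the bookkeeping around the set $A$: the slabs whose normal is a multiple of $e_d$ do \emph{not} slice down to g-splits in $\R^{d-1}$ (their slices are empty or all of $\R^{d-1}$), so the induction hypothesis cannot be applied to them directly. The key point is that each such slab, having width at most $1$, blocks an interval of heights of length at most $1$, and there are at most $2k$ of them, so together they cannot block the full height range $[0,2k+1]$; choosing $c^\star$ in the unblocked part eliminates them and leaves at most $2k$ honest $(d-1)$-dimensional g-splits. Aside from this, the only routine verifications are the two facts already isolated above: that $\cl(S)$ is the closed slab $\{\beta\le\alpha x\le\beta+1\}$, and that intersecting such a slab with a coordinate hyperplane $\{x_d=c^\star\}$ yields, when $\bar\alpha\ne\0$, the closure of a $(d-1)$-dimensional g-split with the same integral normal $\bar\alpha$.
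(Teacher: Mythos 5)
Your proof is correct and follows essentially the same route as the paper's: induction on the dimension, choosing a coordinate hyperplane slice whose height avoids the at most $2k$ axis-parallel slabs, and applying the induction hypothesis to the remaining slabs restricted to that slice. If anything, your write-up is slightly more careful than the paper's, since you explicitly handle axis-parallel g-splits with $|\alpha_d|>1$ and the case $|B|<2k$, both of which the paper glosses over.
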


\begin{proof}
We will use induction on the dimension $d$ to prove the claim. When $d=1$, this is clear since any g-split is an interval of length at most 1. Assume the claim holds when $d=n-1$. Next we will show that this is true when $d=n$. Let $I\subseteq\{1,\ldots,2k\}$ be the set of indices such that $S_i=\{x:m< x_1 < m+1\}$ for some $m\in \R$ if $i\in I$. Since $|I|\leq 2k$, we can find some $m_0\in [0, 2k+1]$ such that $\{x:x_1= m_0\}\not\subseteq \cl(S_i)$ for each $i\in I$. For $i\in\{1,\ldots,2k\}\backslash I$, $S_i\cap \{x:x_1=m_0\}$ is a g-split set in the $n-1$ dimensional affine subspace defined by $\{x:x_1=m_0\}$. Then by the induction hypothesis and the definition of $m_0$, we have $P_k\cap \{x:x_1=m_0\}\not\subseteq \bigcup_{i=1}^{2k}\cl(S_i)$, which finishes the proof.
\end{proof}

\begin{lemma}\label{lem:intersect-lower-bound}
Let $n\in \N$. Consider the subset in $\R^n$ defined by $P':=[0,2n+1]^{n-1}\times (2n+1,2n+1+\epsilon)$ for $\epsilon>0$. Let $S_i$, $i=1, \ldots, n$ be split sets that all have nonempty intersection with $\{x\in\R^n:x_n=2n+1\}$. Then $P'\not\subseteq \left(\bigcup_{i=1}^{2n}S_i\right)$.
\end{lemma}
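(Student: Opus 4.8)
The plan is to reduce Lemma~\ref{lem:intersect-lower-bound} to Lemma~\ref{lem::cube-cover} by a slicing argument in the $x_n$ direction, exploiting the fact that each $S_i$ is a genuine \emph{split set} (integer $\beta$) meeting the hyperplane $\{x_n = 2n+1\}$. First I would note that a split set $S_i = \{x : \beta_i < \alpha^i x < \beta_i + 1\}$ with $\alpha^i \in \Z^n \setminus \{\0\}$, $\beta_i \in \Z$, that has nonempty intersection with the hyperplane $\{x_n = 2n+1\}$, must in fact intersect that hyperplane in a nonempty open subset; and because the slab $\{\beta_i < \alpha^i x < \beta_i+1\}$ has ``width'' strictly less than one in the $\alpha^i$ direction while $2n+1 \in \Z$, the set $S_i$ is disjoint from $\{x_n = 2n+2\}$ whenever $\alpha^i$ has a nonzero last coordinate — more carefully, I would argue that the closure $\cl(S_i)$ cannot contain two consecutive integer hyperplanes $\{x_n = 2n+1\}$ and $\{x_n = 2n+2\}$ simultaneously unless $\alpha^i_n = 0$.

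The key step is then the following dichotomy for each $i$. Either (a) $\alpha^i_n = 0$, so $S_i$ is a cylinder over a split set $\bar S_i$ in the first $n-1$ coordinates, independent of $x_n$; or (b) $\alpha^i_n \neq 0$, in which case $S_i \cap \{x_n = c\}$ is a g-split set in the $(n-1)$-dimensional hyperplane for each fixed $c$, and moreover for $c$ ranging over a short interval around $2n+1$ these slices are ``nearly parallel.'' In case (b) the crucial observation is that for values of $x_n$ slightly larger than $2n+1$ (inside $(2n+1, 2n+1+\epsilon)$), the slice $S_i \cap \{x_n = c\}$ is still a g-split set in $\R^{n-1}$ (a slab of width $<1$ after the obvious rescaling of $\bar\alpha^i := (\alpha^i_1,\dots,\alpha^i_{n-1})$, provided $\bar\alpha^i \neq \0$; if $\bar\alpha^i = \0$ then $S_i$ is a slab in $x_n$ alone, and since it meets $\{x_n=2n+1\}$ its closure is contained in $\{2n \le x_n \le 2n+1\}$ or $\{2n+1 \le x_n \le 2n+2\}$, hence contributes nothing new on $(2n+1, 2n+1+\epsilon)$ beyond a single value — I would handle this degenerate subcase separately). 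So after intersecting everything with a generic hyperplane $\{x_n = c_0\}$ with $c_0 \in (2n+1, 2n+1+\epsilon)$, we are covering the $(n-1)$-cube $[0,2n+1]^{n-1}$ (a translate of $P_{n}$ in dimension $d = n-1$, which satisfies $2n+1 = 2n+1$, i.e. $k = n$) by at most $2n$ g-split sets $S_i \cap \{x_n = c_0\}$, contradicting Lemma~\ref{lem::cube-cover} with $d = n-1$ and $k = n$ (note $2k = 2n$, and $P_k = [0,2k+1]^{n-1}$; here we need $2n+1 \le 2k+1$, i.e. $k \ge n$, which holds with equality).

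The main obstacle I anticipate is making precise that the slices $S_i \cap \{x_n = c_0\}$ are honest g-split sets for a \emph{single well-chosen} $c_0$, and that the cube we must cover in the slice really has side length $\le 2k+1$ with $k$ matching the number of available splits. The side-length bookkeeping is the delicate point: Lemma~\ref{lem::cube-cover} needs $2k$ splits to fail to cover $[0,2k+1]^d$, and here we have $n = 2k/2$ splits available to cover a cube of side $2n+1$ in dimension $n-1$; setting $k = n$ we indeed have $2k = 2n$ splits available and cube side $2k+1 = 2n+1$, so the counts line up exactly, which is why the hypothesis is ``$n$ split sets'' and the cube is $[0,2n+1]^{n-1}$. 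The one subtlety requiring care is the splits with $\alpha^i_n = 0$: these are cylinders, so their slice by \emph{any} $\{x_n = c_0\}$ is the same g-split set $\bar S_i$ in $\R^{n-1}$, and they are legitimately counted among the $\le 2n$ slice-splits. I would also double-check that no split is ``lost'' in slicing: a split $S_i$ with $\bar\alpha^i = \0$ and $\alpha^i_n \neq 0$ has closure contained in a slab $\{m \le x_n \le m+1\}$ with $m \in \Z$; since $S_i$ meets $\{x_n = 2n+1\}$ we get $m \in \{2n, 2n+1\}$, and then $S_i \cap \{x_n = c_0\} = \emptyset$ for all $c_0 \in (2n+1, 2n+1+\epsilon)$ when $m = 2n$, while if $m = 2n+1$ it covers the whole slice — but this would mean $S_i \supseteq \{2n+1 < x_n < 2n+2\} \supseteq$ the relevant part of $P'$; however a single such $S_i$ alone does not cover $P'$ since $P'$'s $x_n$-range $(2n+1, 2n+1+\epsilon)$ is contained in it but we could still have other constraints — actually this case would cover all of $P'$ with one split, contradicting nothing; so I must rule it out by observing such $S_i$ does \emph{not} meet $\{x_n = 2n+1\}$ since $S_i$ is the \emph{open} slab $\{2n+1 < x_n < 2n+2\}$, giving the needed contradiction with the hypothesis. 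Tightening this last observation — that the hypothesis ``$S_i$ meets $\{x_n = 2n+1\}$'' combined with $S_i$ being open forces genuine content in the slice — is where the argument must be written carefully.
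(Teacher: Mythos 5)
Your proposal is correct and follows essentially the same route as the paper: both reduce the claim to Lemma~\ref{lem::cube-cover} by slicing with a hyperplane $\{x_n=c\}$, and both use the hypothesis that each $S_i$ meets $\{x_n=2n+1\}$ (together with the integrality of $\alpha^i$ and $\beta_i$) to rule out the degenerate splits $\{\beta<\alpha_n x_n<\beta+1\}$ whose slice would be the entire hyperplane rather than a g-split. The only real difference is where you slice: you take $c_0\in(2n+1,2n+1+\epsilon)$, which places the uncovered point of Lemma~\ref{lem::cube-cover} directly inside $P'$ at the cost of your case analysis on $\alpha^i_n$ and $\bar\alpha^i$, whereas the paper slices at $c=2n+1$ itself and then uses the closedness of $\bigcup_{i}\cl(S_i)$ to perturb the uncovered boundary point into $P'$.
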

\begin{proof}
Let $\cl({S}_i)$ be the closure of $S_i$ for $i=1,\ldots, 2n$ and observe that each $\cl({S}_i)\cap \{x:x_n=2n+1\}$ is the closure of a split set in the $n-1$ dimensional affine subspace defined by $\{x:x_n=2n+1\}$.  Then by Lemma \ref{lem::cube-cover}, $[0,2n+1]^{n-1}\times\{2n+1\}\not\subseteq \left(\bigcup_{i=1}^{2n}\cl({S}_i)\right)$. Since $\left(\bigcup_{i=1}^{2n}\cl({S}_i)\right)$ is a closed set, so there exists $v'\in [0,2n+1]^{n-1}\times\{2n+1\}$ such that $\{x\in\R^n:\|x-v'\|_2\leq \epsilon'\}\cap\left(\bigcup_{i=1}^{2n}\cl({S}_i)\right)=\emptyset$ for some $\epsilon'>0$. Since $v'\in[0,2n+1]^{n-1}\times\{2n+1\}$, so we have $\{x\in \R^n:\|x-v'\|_2\leq \epsilon'\}\cap P'\neq \emptyset$, and $\{x\in\R^n:\|x-v'\|_2\leq \epsilon'\}\cap P'\cap \left(\bigcup_{i=1}^{2n}S_i\right)=\emptyset$, which finishes the proof.
\end{proof}

\begin{theorem}\label{thm:covering-number-1}
Let $n\in \N$ and define $P:=[-\frac{1}{2}, \frac{3}{2}+2n]^n$. Then the split covering number of $P \setminus \conv(P\cap \Z^n)$ is $2n$. 
\end{theorem}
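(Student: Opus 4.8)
\textbf{Proof proposal for Theorem~\ref{thm:covering-number-1}.}

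The plan is to establish the claimed value $2n$ by proving a matching upper bound and lower bound separately. For the upper bound, I would exhibit an explicit family of $2n$ split sets covering $P \setminus \conv(P \cap \Z^n)$. Observe that $P = [-\frac12, \frac32 + 2n]^n$ contains the integer box $[0, 2n+1]^n \cap \Z^n$ whose convex hull is $[0, 2n+1]^n$; the ``extra'' region $P \setminus [0,2n+1]^n$ is the union of the two slabs $\{x : -\frac12 \le x_i < 0\}$ and $\{x : 2n+1 < x_i \le \frac32 + 2n\}$ intersected with $P$, over the $n$ coordinates. For each coordinate $i$, the two split sets $S_i^- := \{x : -1 < x_i < 0\}$ (equivalently $\{x : 0 < -x_i < 1\}$, using $\alpha = -e_i$, $\beta = 0$) and $S_i^+ := \{x : 2n+1 < x_i < 2n+2\}$ ($\alpha = e_i$, $\beta = 2n+1$) together cover the parts of $P$ sticking out past the integer box in coordinate $i$. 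Since every point of $P$ outside $[0,2n+1]^n$ has some coordinate in $(-\frac12, 0)$ or in $(2n+1, \frac32 + 2n]$, these $2n$ split sets cover $P \setminus \conv(P \cap \Z^n)$, giving split covering number at most $2n$.

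For the lower bound, I would argue that any family of fewer than $2n$ split sets fails to cover $P \setminus \conv(P\cap\Z^n)$, and here is where Lemma~\ref{lem:intersect-lower-bound} (and its engine Lemma~\ref{lem::cube-cover}) do the work. The idea is that the uncovered region touches, near each of the $2n$ facets of the inner cube $[0,2n+1]^n$, a full $(n-1)$-dimensional face, and covering such a face requires many splits. More precisely: suppose $\mathcal{S} = \{S_1, \dots, S_m\}$ with $m \le 2n - 1$ covers $P \setminus \conv(P\cap\Z^n)$. Consider one facet direction, say the ``top'' face in coordinate $n$: the set $[0,2n+1]^{n-1} \times (2n+1, \frac32 + 2n]$ lies in $P \setminus \conv(P\cap\Z^n)$, so it must be covered by those $S_j$ that meet it. Split sets meeting this thin slab near $x_n = 2n+1$ either are of the form $\{x : 2n < x_n < 2n+1\}$... no — I must be careful: a split set $\{x : \beta < \alpha x < \beta+1\}$ with $\alpha$ not proportional to $e_n$ restricts to a genuine $(n-1)$-dimensional split on the hyperplane $\{x_n = 2n+1\}$, while one with $\alpha \parallel e_n$ cannot contain any point with $x_n = 2n+1$ in its closure unless $\beta = 2n$ or $\beta = 2n+1$, and then it's disjoint from the open slab $x_n > 2n+1$ or is a single split covering only a sliver. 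By applying Lemma~\ref{lem:intersect-lower-bound} with $\epsilon$ chosen small enough that the slab fits inside $P$, the slab $[0,2n+1]^{n-1}\times(2n+1, 2n+1+\epsilon)$ cannot be covered by $2n$ (hence by $m \le 2n-1$) split sets that meet $\{x_n = 2n+1\}$; a fortiori it cannot be covered by $m \le 2n-1$ arbitrary split sets from $\mathcal{S}$, since those not meeting that hyperplane's closure are irrelevant near the boundary. This contradiction shows $m \ge 2n$.

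The step I expect to be the main obstacle is making the lower-bound reduction fully rigorous — specifically, confirming that \emph{every} split set in a purported small cover either is useless for the critical slab (because its closure misses $\{x_n = 2n+1\}$, or because it is an axis-aligned split in coordinate $n$ that only grazes the boundary and contributes nothing to covering the relevant $(n-1)$-dimensional face in the limit) or else descends to a split set in the hyperplane to which Lemma~\ref{lem:intersect-lower-bound} applies. One has to handle g-split sets with general integer normal $\alpha$, check that closure/limit arguments are valid (this is why Lemma~\ref{lem:intersect-lower-bound} passes to closures and then perturbs to an open neighborhood of an uncovered point $v'$), and verify $\epsilon$ can be taken $\le \frac12$ so the slab genuinely sits inside $P$. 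The upper bound and the identification $\conv(P \cap \Z^n) = [0, 2n+1]^n$ are routine; the content is entirely in invoking the covering lemmas with the right slab.
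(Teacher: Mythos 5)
Your upper bound and the identification $\conv(P\cap\Z^n)=[0,2n+1]^n$ are correct and match the paper's construction (the $2n$ axis-aligned splits $\{x:-1<x_i<0\}$ and $\{x:2n+1<x_i<2n+2\}$). The lower bound, however, has a genuine error. Your plan is to fix one facet, say $\{x_n=2n+1\}$, and derive a contradiction by showing that $m\le 2n-1$ split sets cannot cover the thin slab $[0,2n+1]^{n-1}\times(2n+1,2n+1+\epsilon)$, on the grounds that splits not meeting that hyperplane are ``irrelevant near the boundary.'' That claim is false: the split set $\{x:2n+1<x_n<2n+2\}$ is disjoint from the hyperplane $\{x_n=2n+1\}$ (it is open), yet it covers the entire slab by itself --- indeed it is exactly one of the splits in your own upper-bound cover. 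So no contradiction can be extracted from a single facet: one split suffices there, and Lemma~\ref{lem:intersect-lower-bound} only rules out covers consisting entirely of splits that \emph{do} meet the hyperplane. The case you wave away as ``a single split covering only a sliver'' is precisely the case that destroys the argument.

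The paper uses Lemma~\ref{lem:intersect-lower-bound} in the opposite direction, and this is the missing idea: since a purported cover of size at most $2n$ cannot cover the region $P_{i1}$ beyond the facet $\{x_i=2n+1\}$ using only splits that meet that hyperplane, the cover must contain at least one split $S$ with $S\cap\{x_i=2n+1\}=\emptyset$ and $S\cap P_{i1}\neq\emptyset$. Such an $S$ is a connected slab meeting $\{x_i>2n+1\}$ and missing the hyperplane, so it lies entirely in $\{x_i>2n+1\}$; a slab $\{\beta<\alpha x<\beta+1\}$ contained in an open coordinate halfspace forces $\alpha$ parallel to $e_i$, so $S$ is an $x_i$-aligned split sitting strictly beyond that one facet and cannot play the same role for any of the other $2n-1$ facet regions. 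Running this over all $2n$ facets produces $2n$ pairwise distinct ``dedicated'' splits, giving the lower bound. Your proposal contains neither this existence step nor the distinctness count across facets, and without them the lower bound does not go through.
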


\begin{proof}
It is clear that $P_I:= \conv(P\cap \Z^n)=[0,2n+1]^n$. 
Let $P_{i1}:=\{x:2n+1<x_i<2n+\frac{3}{2},0<x_j<2n+1, \mbox{for }j\neq i,j=1,\ldots, n\}$, $P_{i2}:=\{x:-\frac{1}{2}<x_i<0,0<x_j<2n+1, \mbox{for }j\neq i,j=1,\ldots, n\}$. Also define the split sets $C_{i1}=\{x: 2n+1<x_i<2n+2\}$, and $C_{i2}=\{x:-1<x_i<0\}$ for $i=1,\ldots, n$. Since $P\backslash P_I = \bigcup_{i=1}^n\bigcup_{j=1}^2P_{ij} \subseteq \bigcup_{i=1}^n\bigcup_{j=1}^2C_{ij}$, so the split covering number of $P\backslash P_I$ is at most $2n$. Next we will show that it must be at least $2n$. 

Consider the split sets that cover $P_{i1}$ for some $i \in \{1, \ldots, n\}$. By Lemma \ref{lem:intersect-lower-bound}, if $P_{i1}$ is covered by only split sets that have nontrivial intersection with $\{x\in\R^n:x_i=2n+1\}$, then we need more than $2n$ splits. Thus, we must use at least one split set that has empty intersection with $\{x\in\R^n:x_i=2n+1\}$ in the cover for $P_{i1}$. The same goes for $P_{i2}$: we must use at least one split set that has empty intersection with $\{x\in\R^n:x_i=0\}$. Since the choice of $i \in \{1, \ldots, n\}$ was arbitrary, this shows we need at least $2n$ split sets to cover $\bigcup_{i=1}^n\bigcup_{j=1}^2P_{ij} = P\backslash P_I$.
%
\end{proof}





\begin{theorem}\label{thm:large-hull-complexity}
Let $P$ be $[-\frac{1}{2}, \frac{3}{2}+2n]^n$. Then its facet complexity is $3$ and its hull complexity is $4n+1$ in terms of any branch-and-cut certificate based on any subfamily of split disjunctions that includes all variable disjunctions. 
\end{theorem}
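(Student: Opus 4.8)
\textbf{Proof proposal for Theorem~\ref{thm:large-hull-complexity}.}

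The plan is to establish the two numbers — facet complexity $=3$ and hull complexity $=4n+1$ — separately, and for the hull complexity to prove matching upper and lower bounds. For the facet complexity, I would note that $P = [-\tfrac12, \tfrac32 + 2n]^n$ has $P_I = \conv(P \cap \Z^n) = [0, 2n+1]^n$, whose facets are $x_i \leq 2n+1$ and $x_i \geq 0$ for $i = 1, \ldots, n$. Each such facet is obtained by a single rounding: for instance $x_i \leq 2n+1$ is the Chv\'atal–Gomory cut from $x_i \leq 2n + \tfrac32$, whose validity certificate is a trivial Farkas certificate of size~$1$. Applying the variable disjunction $\{x_i \leq 2n+1\} \cup \{x_i \geq 2n+2\}$ at the root, the $x_i \geq 2n+2$ branch is immediately infeasible for $P$ (since $P \subseteq \{x_i \leq 2n+\tfrac32\}$) with a size-$1$ Farkas certificate, and the other branch already satisfies $x_i \leq 2n+1$. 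This is a tree with root, two children, and a constant-size infeasibility certificate, giving validity complexity $3$ for each facet (and $\geq 3$ since any nontrivial branch-and-cut tree certifying a nontrivial halfspace needs at least a root, a branching/cutting node's children, hence at least $3$ nodes counted in the size). So the facet complexity is exactly $3$.

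For the hull complexity, the upper bound $4n+1$ should come from the strategy in the proof of Theorem~\ref{thm:facet-hull-2} (pure cutting plane proofs): we have $f = 2n$ nontrivial facets, each with validity complexity $3$, i.e.\ each cutting plane sequence $\mathcal{H}_i$ has $N_i = 2$ (two cutting planes — or rather, the branch-and-cut tree of size $3$ has $N_i = 2$ in the notation there; I need to reconcile: with a pure cutting-plane derivation each facet $x_i \leq 2n+1$ needs just one CG cut, so $N_i$ counts that one cut plus its validity certifier). Concatenating the $2n$ cutting-plane derivations and counting the root node gives $2n \cdot 2 + 1 = 4n+1$. I would spell out that each facet $x_i \leq 2n+1$ contributes the cut itself plus a size-$1$ Farkas certifier (two units), and likewise for $x_i \geq 0$, totalling $4n$ units of work, and the root node makes $4n+1$.

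For the hull-complexity lower bound $4n+1$, I would invoke Lemma~\ref{lem:covering-hull}: the hull complexity of $\conv(P \cap \Z^n)$ with respect to $P$ is at least $2\ell + 1$ where $\ell$ is the split covering number of $P \setminus \conv(P \cap \Z^n)$, provided all disjunctions and cutting planes are based on split disjunctions. By Theorem~\ref{thm:covering-number-1}, that split covering number is exactly $2n$, so $\ell = 2n$ and the hull complexity is at least $2(2n) + 1 = 4n+1$. Combined with the matching upper bound, the hull complexity is exactly $4n+1$. The main obstacle I anticipate is bookkeeping consistency: the various complexity measures count nodes and validity certifiers slightly differently across Lemma~\ref{lem:covering-hull}, Theorem~\ref{thm:facet-hull}, and Theorem~\ref{thm:facet-hull-2}, so I must be careful that the "size $3$" facet certificate, the "$N_i$" count, and the "$2\ell+1$" lower bound all use the same convention (size $=$ number of nodes plus sizes of infeasibility certificates at cutting-plane nodes), and that the upper-bound construction genuinely uses only variable disjunctions so that Lemma~\ref{lem:covering-hull} applies to it as a lower bound. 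A secondary check is confirming $P_I = [0,2n+1]^n$ and that $P \setminus P_I$ decomposes exactly into the $2n$ slabs $P_{ij}$ as in Theorem~\ref{thm:covering-number-1}, which is where the "$+2n$" in the box dimensions is calibrated.
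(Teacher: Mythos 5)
Your proposal follows essentially the same route as the paper: identify $P_I=[0,2n+1]^n$, observe each facet is a single Chv\'atal--Gomory rounding to get facet complexity $3$, obtain the upper bound $4n+1$ by concatenating the $2n$ CG derivations, and obtain the matching lower bound from Lemma~\ref{lem:covering-hull} together with the split covering number $2n$ from Theorem~\ref{thm:covering-number-1}. Your version is in fact somewhat more explicit than the paper's (which does not argue the lower bound of $3$ on facet complexity and glosses over the node/certificate bookkeeping you rightly flag), but there is no substantive difference in approach.
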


\begin{proof}
It is clear that $P_I:= \conv(P\cap \Z^n)=[0, 2n+1]^n$. Each facet-defining halfspace of $P_I$ is a Chv\'atal-Gomory cut. Thus its facet complexity is $3$. 

Since $P_I$ can be derived by $2n$ Chv\'atal-Gomory cuts defined by the facet-defining halfspaces, so the hull complexity is at most $4n+1$. By Theorem \ref{thm:covering-number-1}, we need at least $2n$ split sets to cover $P\backslash P_I$, so the lower bound of hull complexity is $4n+1$ due to Lemma \ref{lem:covering-hull}, which finishes the proof. 
\end{proof}


\paragraph{Facet versus validity complexity.} We next give an example in Theorem~\ref{thm:facet<valid} below showing that the upper bounds in Theorems~\ref{thm:valid-facet} and~\ref{thm:valid-facet-2} cannot be improved any further. In particular, the example has facet complexity $O(1)$ and a certain valid inequality is shown to have validity complexity of at least $\Omega(n)$.

\begin{lemma}\label{lem:simplex-cover}
Let $P(n,b)=\{x\in \R^n: x_i\geq \epsilon, \mbox{ for }i=1,2,3,\ldots, n,~    \sum_{i=1}^nx_i <\;n\epsilon+ b\}$ for some $0\leq\epsilon\leq 1$, and $b>0$. Let $\mathcal{S}$ be a set of $\ceil{b}-1$ g-split sets. Then $P(n,b)\not\subseteq\bigcup_{S\in\mathcal{S}}S$. 
\end{lemma}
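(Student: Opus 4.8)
The plan is to mimic the inductive dimension-reduction argument used in Lemma~\ref{lem::cube-cover} and Lemma~\ref{lem:intersect-lower-bound}, but now slicing the simplex-like region $P(n,b)$ by hyperplanes of the form $\{x : \alpha x = \gamma\}$ dictated by the g-splits in $\mathcal{S}$. First I would reduce to the case $\epsilon = 0$ by the translation $x \mapsto x - \epsilon \mathbf{1}$, which carries $P(n,b)$ to $\{x : x_i \geq 0,\ \sum_i x_i < b\}$ and carries g-splits to g-splits (g-splits are defined by integer normals $\alpha \in \Z^n\setminus\{\mathbf 0\}$, and the real offset $\beta$ absorbs the shift $\alpha\cdot(\epsilon\mathbf 1)$). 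So it suffices to show the open scaled simplex $\Delta_b := \{x \geq 0 : \sum_i x_i < b\}$ cannot be covered by $\ceil{b}-1$ closed g-splits (passing to closures only makes the claim stronger, as in Lemma~\ref{lem:intersect-lower-bound}).

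The key combinatorial observation is the ``width'' bound: for any g-split $S = \{x : \beta < \alpha x < \beta+1\}$ with $\alpha \in \Z^n\setminus\{\mathbf 0\}$, and for any segment or curve along which we travel in a direction $d$ with $\alpha \cdot d \neq 0$, the portion inside $\cl(S)$ has $\alpha$-extent at most $1$. I would exploit this along the main diagonal: consider the point $x(t) = t\cdot\mathbf{1}$ for $t \in [0, b/n)$, equivalently parametrize the diagonal chord of $\Delta_b$ from $\mathbf 0$ to (just short of) $(b/n)\mathbf 1$, which has $\sum_i$-coordinate ranging over $[0,b)$. For a g-split with normal $\alpha$, if $\alpha\cdot\mathbf 1 \neq 0$ then $\cl(S)$ meets this diagonal in a sub-interval of $\sum_i$-length at most $n/|\alpha\cdot\mathbf 1| \cdot \tfrac{|\alpha\cdot\mathbf1|}{n}$... — more carefully, in $t$-space the width is $\le 1/|\alpha\cdot\mathbf 1| \le 1$, so the $\sum_i = nt$ coordinate varies by at most $n/ |\alpha \cdot \mathbf 1| \le n$; this crude bound is too weak, so instead I would do the induction on $n$ directly, exactly paralleling Lemma~\ref{lem::cube-cover}: split the index set $\mathcal S$ according to whether $S_i$ has normal proportional to $e_j - e_k$ form or not, pick a generic slicing hyperplane $\{x_n = c\}$ avoiding the closures of the "bad" splits, and recurse; the base case $n=1$ is the interval $[0,b)$ which needs $\ge \ceil b$ unit-length g-splits hence cannot be covered by $\ceil b - 1$. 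The careful statement of which hyperplane to slice with, and checking that the induced region on the slice is again a copy of $P(n-1, b')$ with $\ceil{b'} \ge \ceil{b}$ (or handling the drop in $b$), is where the bookkeeping lives.

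I expect the main obstacle to be choosing the slicing direction correctly so that (a) it kills at least one g-split per level of the recursion — i.e., the count $\ceil b - 1$ decreases by exactly the number of splits removed — and (b) the cross-section of $\Delta_b$ is genuinely a lower-dimensional simplex of the same combinatorial type with the parameter $b$ not decreasing below the next integer; a careless slice could shrink $b$ and weaken the induction. The cleanest route is probably to slice by a hyperplane $\{\sum_{i} x_i = s\}$ or by a coordinate hyperplane $\{x_n = c\}$ with $c$ chosen small and generic: then $\Delta_b \cap \{x_n = c\}$ is (a translate of) $\{x' \ge 0 : \sum_{i<n} x'_i < b - c\}$, and choosing $c$ below $b - (\ceil b - 1) = \lceil b\rceil - 1 - (\lceil b \rceil - b) \ge$ (something positive when $b\notin\Z$; when $b\in\Z$ handle separately) keeps $\ceil{b-c} = \ceil b$, while genericity of $c$ lets us discard every g-split whose normal has a nonzero $x_n$-component — and there must be at least one such among any cover of a full-dimensional piece, giving the inductive decrement. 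I would present this with the same level of detail as the proofs of Lemmas~\ref{lem::cube-cover}--\ref{lem:intersect-lower-bound}, citing them as templates.
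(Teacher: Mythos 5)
Your high-level strategy (induction on the dimension, slicing $P(n,b)$ by a coordinate hyperplane $\{x_n=c\}$, base case of unit intervals covering $[\epsilon,\epsilon+b)$) is the same as the paper's, and the reduction to $\epsilon=0$ is harmless. But the step you yourself flag as ``where the bookkeeping lives'' is resolved incorrectly, in two ways. First, genericity of $c$ does \emph{not} let you discard every g-split whose normal has a nonzero $x_n$-component: a g-split such as $\{\beta<x_1+x_n<\beta+1\}$ meets \emph{every} hyperplane $\{x_n=c\}$ and restricts there to a genuine g-split in $n-1$ variables. The only splits you can avoid by choosing the slicing height are those whose normal is a multiple of $e_n$, i.e.\ the slabs $\{b'<x_n<b'+1\}$. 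Second, your requirement that $c$ be small enough to keep $\ceil{b-c}=\ceil{b}$ is incompatible with avoiding those slabs: with $b=2.5$ you need $c<1/2$, but a single slab $\{-0.1<x_n<0.9\}$ already covers every admissible height, so no valid $c$ exists. Since you cannot in general both preserve the ceiling of the parameter and kill a split, your inductive decrement breaks down.

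The paper's proof handles exactly this tension differently: let $I$ index the splits that are slabs in the $x_n$-direction, and take the slicing height $b''$ to be the \emph{minimum} $x_n$-value over $\cl(P(n,b))$ not covered by $\bigcup_{i\in I}S_i$. This $b''$ may be far from $\epsilon$, so the cross-section is $P(n-1,\,b-b''+\epsilon)$ with a genuinely smaller parameter; the point is that covering all heights in $[\epsilon,b'')$ forces $|I|\geq\ceil{b''-\epsilon}$, and the superadditivity $\ceil{b}-\ceil{b''-\epsilon}\leq\ceil{b-b''+\epsilon}$ shows the at most $\ceil{b}-1-|I|$ remaining splits are too few for the slice by the induction hypothesis. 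In other words, you must let the parameter $b$ drop and \emph{charge the drop to the discarded slabs}, rather than insist the parameter stays put. Your proposal as written does not contain this accounting, so it has a genuine gap.
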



\begin{proof}
When $b\leq 1$, it is trivial, so we assume $b> 1$. We will prove the claim by induction on the dimension $n$. 

For $n=1$, $P(1,b)=\{x\in \R:\epsilon\leq x<\epsilon+ b\}$. Thus the claim is true in this case since any g-split is an interval of length at most 1. Assume it is true for $n=d-1$, $d\geq 2$. Then when $n=d$, let $S_i$ be a g-split in $\R^n$ for $i=1,\ldots,\ceil{b}-1$. Suppose $I\subseteq \{1,\ldots, \ceil{b}-1\}$ is a set of all the indices such that $S_i=\{x:b'< x_n< b'+1\}$ for $i\in I$ and $b'\in \R$. Since $|I|\leq \ceil{b}-1$, we have $\cl(P(n,b))\backslash (\bigcup_{i\in I}S_i)\neq \emptyset$ and $\cl(P(n,b))\backslash (\bigcup_{i\in I}S_i)$ is a compact set. Thus, $b'' := \min\{x_n : x \in \cl(P(n,b))\backslash (\bigcup_{i\in I}S_i)\}$ is a well-defined real number. Also, since $|I|\leq \ceil{b}-1$, we have $\epsilon\leq b''\leq \epsilon+\ceil{b}-1<n\epsilon+b$. Thus, $P(n,b)\cap \{x\in\R^n:x_n=b''\}\neq \emptyset$. 

Note that we must have $|I|\geq \ceil{b''-\epsilon}$. Let $\mathcal{L}$ be the $n-1$ dimensional affine subspace defined by $\{x\in\R^n:x_n=b''\}$. Then $P(n,b)\cap \{x\in\R^n:x_n=b''\}=\{x\in\R^n:x_n=b'',x_i\geq \epsilon, \mbox{ for }i=1,\ldots, n-1, \sum_{i=1}^{n-1}x_i< n\epsilon + b-b''\}$, which is $P(n-1, b-b''+\epsilon)$ in $\mathcal{L}$. Since $|\{1,\ldots, \ceil{b}-1\}\backslash I|\leq \ceil{b}-1 - \ceil{b''-\epsilon}\leq \ceil{b-b''+\epsilon}-1$, by the induction hypothesis,  we have $P(n,b)\cap \{x\in\R^n:x_n=b''\}\not\subseteq\bigcup_{i\in\{1,\ldots, \ceil{b}-1\}\backslash I}S_i$. By the definition of $b''$, we have that  $P(n,b)\cap \{x\in\R^n:x_n=b''\}\not\subseteq\bigcup_{i\in\{1,\ldots, \ceil{b}-1\}}S_i$, which finishes the proof. 
\end{proof}

\begin{cor}\label{cor:simplex-split-cover}
Suppose $P'=\{x\in \R^n: x_i\geq \frac{1}{2n}, \mbox{ for }i=1,2,3,\ldots, n,~    \sum_{i=1}^nx_i<n\}$. The split covering number of $P'$ is at least n. 
\end{cor}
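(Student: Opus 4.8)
The plan is to derive this as an immediate specialization of Lemma~\ref{lem:simplex-cover}. I would first match $P'$ to the family $P(n,b)$ by choosing $\epsilon = \frac{1}{2n}$ and $b = n - \frac12$. Then I would check the hypotheses of Lemma~\ref{lem:simplex-cover}: we have $0 \leq \epsilon = \frac{1}{2n} \leq 1$ for every $n \in \N$, and $b = n - \frac12 \geq \frac12 > 0$, so both requirements are satisfied. Next I would verify the defining data coincide: $n\epsilon + b = \frac12 + \bigl(n - \frac12\bigr) = n$, so the constraint $\sum_{i=1}^n x_i < n\epsilon + b$ becomes exactly $\sum_{i=1}^n x_i < n$, and the lower bounds $x_i \geq \epsilon$ become $x_i \geq \frac{1}{2n}$; hence $P(n,b) = P'$.

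The second step is the arithmetic on the ceiling: since $n \geq 1$, we have $n - 1 < n - \frac12 \leq n$, so $\lceil b \rceil = \lceil n - \tfrac12 \rceil = n$, and therefore $\lceil b \rceil - 1 = n - 1$. Applying Lemma~\ref{lem:simplex-cover} with this value, no collection of $n-1$ g-split sets covers $P' = P(n,b)$.

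Finally, I would observe that every split set $\{x : \beta < \alpha x < \beta + 1\}$ with $\beta \in \Z$ is in particular a g-split set (the definition of g-split allows arbitrary $\beta \in \R$), so a covering of $P'$ by $k$ split sets is also a covering by $k$ g-split sets. Consequently, if $P'$ were covered by $n-1$ split sets it would be covered by $n-1$ g-split sets, contradicting the previous paragraph. Hence the split covering number of $P'$ is at least $n$.

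I do not anticipate any real obstacle here: the argument is a direct substitution into Lemma~\ref{lem:simplex-cover} followed by a one-line ceiling computation and the trivial remark that split sets are g-splits. The only points requiring any care are confirming that the parameter choices meet the lemma's hypotheses for all $n \in \N$ (including small $n$) and getting the value $\lceil n - \tfrac12\rceil = n$ right.
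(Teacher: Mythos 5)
Your proposal is correct and is exactly the paper's argument: the paper's proof is the one-line "Follows from Lemma~\ref{lem:simplex-cover} with $\epsilon=\frac{1}{2n}$ and $b=n-\frac{1}{2}$," and you have simply spelled out the verification that $P(n,b)=P'$, that $\lceil n-\tfrac12\rceil-1=n-1$, and that split sets are g-splits. No issues.
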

\begin{proof}
Follows from Lemma \ref{lem:simplex-cover} with $\epsilon=\frac{1}{2n}$ and $b=n-\frac{1}{2}$.
\end{proof}

\begin{theorem}\label{thm:facet<valid}
Suppose $P=[\frac{1}{2n}, 2+n-\frac{1}{2n}]^n$. Then $P_I=[1,n+1]^n$, all facets are simple Chv\'atal-Gomory cuts with validity complexity $O(1)$, and the validity complexity of $\sum_{i=1}^nx_i\geq n$ is $2n+1$ with respect to branch-and-cut trees based on any subfamily of split disjunctions that includes all the variable disjunctions. 
\end{theorem}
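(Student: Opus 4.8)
\emph{Proof plan.} The plan is to first pin down $P_I$ and its facets, and then to establish the value $2n+1$ for the validity complexity of $\sum_{i=1}^n x_i\ge n$ by a matching upper bound (an explicit pure branch‑and‑bound tree that uses only variable disjunctions) and a matching lower bound (a split‑covering argument that reduces to Corollary~\ref{cor:simplex-split-cover}).

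\textbf{The integer hull and its facets.} Since $0<\tfrac1{2n}<1$ and $n+1<2+n-\tfrac1{2n}<n+2$, the integers in $[\tfrac1{2n},\,2+n-\tfrac1{2n}]$ are exactly $\{1,\dots,n+1\}$, so $P\cap\Z^n=\{1,\dots,n+1\}^n$ and $P_I=[1,n+1]^n$. Its $2n$ facets are $x_i\ge 1$ and $x_i\le n+1$; the inequality $x_i\ge1$ is the Chv\'atal--Gomory cut obtained by rounding the valid inequality $x_i\ge\tfrac1{2n}$, and likewise $x_i\le n+1$ comes from rounding $x_i\le 2+n-\tfrac1{2n}$. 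A three‑node tree proves each such facet: the root, a cutting‑plane certifier carrying the trivial Farkas certificate for the emptiness of $P\cap\{x_i\le 0\}$ (resp.\ $P\cap\{x_i\ge n+2\}$) in $\R^n$, and one further leaf whose corresponding set is contained in the cut. Hence the facet complexity of $P$ is $O(1)$. Summing $x_i\ge1$ over $i$ shows $\sum_i x_i\ge n$ is valid for $P_I$ (it meets $P_I$ only at $(1,\dots,1)$, so it is not facet‑defining), so $H':=\{x:\sum_i x_i\ge n\}$ is a valid halfspace for $P$ and its validity complexity is well defined.

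\textbf{Upper bound.} I will use the pure branch‑and‑bound tree that branches successively on $x_1,\dots,x_n$ via the variable disjunctions $\{x_i\le 0\}\cup\{x_i\ge1\}$. At stage $i$ the child ``$x_i\le 0$'' has corresponding set contained in $P\cap\{x_i\le 0\}=\emptyset$ (since $P$ forces $x_i\ge\tfrac1{2n}$), hence is a leaf, and we recurse in the child ``$x_i\ge1$''. After $n$ branchings the surviving leaf has corresponding set $\subseteq P\cap\{x_1\ge1\}\cap\dots\cap\{x_n\ge1\}\subseteq H'$. The tree has $n$ internal nodes and $n+1$ leaves, hence $2n+1$ nodes, and since it has no valid‑halfspace node its size equals $2n+1$; as it uses only variable disjunctions it is admissible for every family in the statement. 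Thus the validity complexity of $\sum_i x_i\ge n$ is at most $2n+1$.

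\textbf{Lower bound.} Let $\cT$ be any branch‑and‑cut tree based on a subfamily of split disjunctions that proves $H'$ with respect to $P$, and let $\mathcal{S}$ be the set of split sets of the split disjunctions appearing in $\cT$ (used to branch or to derive a cutting plane). The key claim is
\[
P\cap\Bigl\{x:\ \sum_{i=1}^n x_i<n\Bigr\}\ \subseteq\ \bigcup_{S\in\mathcal{S}}S .
\]
To prove it, trace a point $p\in P$ with $\sum_i p_i<n$ down $\cT$: at a branching node with split set $S$, either $p$ descends to one of the two children or $p\in S$; at a node whose cutting plane is derived from the split set $S$, either $p$ lies in that cut (and descends to the right child) or $p\in S$ and $p$ lands in the cutting‑plane certifier, whose corresponding set is contained in $S$ (as in the proof of Lemma~\ref{lem:covering-hull}). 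So if $p$ enters no member of $\mathcal{S}$, it must reach a leaf and belong to that leaf's corresponding set, which is either contained in $H'$ (an ordinary leaf), impossible since $\sum_i p_i<n$, or contained in some $S\in\mathcal{S}$ (a certifier leaf). This proves the claim. Now on $\{x:\sum_i x_i<n,\ x_i\ge\tfrac1{2n}\text{ for all }i\}$ the upper box constraints of $P$ are automatically satisfied, so $P\cap\{x:\sum_i x_i<n\}$ equals exactly the set $P'$ of Corollary~\ref{cor:simplex-split-cover} with $\epsilon=\tfrac1{2n}$; hence $|\mathcal{S}|\ge n$. Since each internal node of $\cT$ contributes at most one split set, $\cT$ has at least $n$ internal nodes, and since every internal node has at least two children, $\cT$ has at least $2n+1$ nodes, so its size is at least $2n+1$. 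Combined with the upper bound, this gives validity complexity $=2n+1$.

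\textbf{Main obstacle.} The delicate step is the covering claim in the lower bound: one must account correctly for the two ways a \emph{real} (non‑integer) point can be removed along a split‑based branch‑and‑cut tree — it can fall into the open split set at a branching, or be cut off by a split‑derived cutting plane (whose cut‑off slab again lies inside the split set) — and then recognize the leftover region $P\setminus H'$ as precisely the simplex‑type set covered by Corollary~\ref{cor:simplex-split-cover} (which rests on the inductive covering bound of Lemma~\ref{lem:simplex-cover}). The ensuing node count, and the computation of $P_I$ and of the facet‑generating Chv\'atal--Gomory cuts, are routine.
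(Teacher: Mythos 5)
Your proof is correct and follows essentially the same route as the paper: the upper bound via the $n$ variable disjunctions $\{x_i\le 0\}\cup\{x_i\ge 1\}$, and the lower bound by reducing to the split cover number of the simplex-like region $P\cap\{\sum_i x_i<n\}$ and invoking Corollary~\ref{cor:simplex-split-cover}. The only cosmetic difference is that you prove the covering claim directly by tracing a point down the tree, whereas the paper routes the same argument through the first inequality of Theorem~\ref{thm:reverse-valid} and Lemma~\ref{lem:covering-hull}.
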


\begin{proof}
It is clear that $P_I=[1,n+1]^n$, and all facets are obtained by rounding the right hand sides of the bound inequalities and thus the facet complexity is $O(1)$. Let $P'=\{x\in \R^n: x_i\geq \frac{1}{2n}, \mbox{ for }i=1,2,3,\ldots, n,~ \sum_{i=1}^nx_i<n\}$. By Theorem~\ref{thm:reverse-valid}, the validity complexity of $\sum_{i=1}^nx_i\geq n$ is at least its reverse complexity, which is the hull complexity of $\emptyset$ with respect to $P'$ (the analysis of the first inequality in Theorem~\ref{thm:reverse-valid} does not require all halfspace disjunctions). By Corollary \ref{cor:simplex-split-cover}, we need at least $n$ split sets $S_j$ for $j=1,\ldots, n$ such that $P'\subseteq \bigcup_{j=1}^{n}S_i$. Thus the hull complexity of $\emptyset$ with respect to $P'$ is at least $2n+1$ by Lemma \ref{lem:covering-hull}, and so the validity complexity of $\sum_{i=1}^nx_i\geq n$ is at least $2n+1$. This lower bound can be attained by applying valid disjunctions $\{x\in \R^n:x_i\leq 0\} \cup\{x \in \R^n: x_i \geq 1\}$ for $i=1,\ldots,n$, which finishes the proof. 
\end{proof}

\begin{remark}
The above analysis shows that the split cut proof length of the valid inequality $\sum_{i=1}^nx_i\geq n$ is exactly $n+1$. 
\end{remark}



\bibliographystyle{plain}
\bibliography{../full-bib}

\providecommand\CheckAccent[1]{\accent20 #1}
\begin{thebibliography}{10}

\bibitem{amenta2015helly}
Nina Amenta, Jes{\'u}s~A De~Loera, and Pablo Sober{\'o}n.
\newblock Helly's theorem: new variations and applications.
\newblock {\em arXiv preprint arXiv:1508.07606}, 2015.

\bibitem{AverkovWeismantel12}
Gennadiy Averkov and Robert Weismantel.
\newblock Transversal numbers over subsets of linear spaces.
\newblock {\em Adv. Geom.}, 12(1):19--28, 2012.

\bibitem{baes2015duality}
Michel Baes, Timm Oertel, and Robert Weismantel.
\newblock Duality for mixed-integer convex minimization.
\newblock {\em Mathematical Programming}, 158(1):547--564, 2016.

\bibitem{basu2017optimality}
Amitabh Basu, Michele Conforti, G{\'e}rard Cornu{\'e}jols, Robert Weismantel,
  and Stefan Weltge.
\newblock Optimality certificates for convex minimization and {H}elly numbers.
\newblock {\em Operations Research Letters}, 45(6):671--674, 2017.

\bibitem{basu-BB-CP}
Amitabh Basu, Michele Conforti, Marco Di~Summa, and Hongyi Jiang.
\newblock Complexity of cutting plane and branch-and-bound algorithms for
  mixed-integer optimization.
\newblock To appear in Mathematical Programming, 2019.

\bibitem{basu-BB-CP-II}
Amitabh Basu, Michele Conforti, Marco Di~Summa, and Hongyi Jiang.
\newblock Complexity of cutting plane and branch-and-bound algorithms for
  mixed-integer optimization--{II}.
\newblock To appear in Combinatorica, 2020.

\bibitem{beame_et_al:LIPIcs:2018:8341}
Paul Beame, Noah Fleming, Russell Impagliazzo, Antonina Kolokolova, Denis
  Pankratov, Toniann Pitassi, and Robert Robere.
\newblock {Stabbing Planes}.
\newblock In Anna~R. Karlin, editor, {\em 9th Innovations in Theoretical
  Computer Science Conference (ITCS 2018)}, volume~94 of {\em Leibniz
  International Proceedings in Informatics (LIPIcs)}, pages 10:1--10:20,
  Dagstuhl, Germany, 2018. Schloss Dagstuhl--Leibniz-Zentrum fuer Informatik.

\bibitem{bell1977theorem}
David~E. Bell.
\newblock A theorem concerning the integer lattice.
\newblock {\em Studies in Applied Mathematics}, 56(2):187--188, 1977.

\bibitem{bockmayr1999chvatal}
Alexander Bockmayr, Friedrich Eisenbrand, Mark Hartmann, and Andreas~S Schulz.
\newblock On the {C}hv{\'a}tal rank of polytopes in the 0/1 cube.
\newblock {\em Discrete Applied Mathematics}, 98(1-2):21--27, 1999.

\bibitem{bonet1997lower}
Maria Bonet, Toniann Pitassi, and Ran Raz.
\newblock Lower bounds for cutting planes proofs with small coefficients.
\newblock {\em The Journal of Symbolic Logic}, 62(3):708--728, 1997.

\bibitem{buss1996cutting}
Samuel~R. Buss and Peter Clote.
\newblock Cutting planes, connectivity, and threshold logic.
\newblock {\em Archive for Mathematical Logic}, 35(1):33--62, 1996.

\bibitem{chvatal1980hard}
Va{\v{s}}ek Chv{\'a}tal.
\newblock Hard knapsack problems.
\newblock {\em Operations Research}, 28(6):1402--1411, 1980.

\bibitem{chvatal1984cutting}
Va{\v{s}}ek Chv{\'a}tal.
\newblock {\em Cutting-plane proofs and the stability number of a graph, Report
  Number 84326-OR}.
\newblock Institut f{\"u}r {\"O}konometrie und Operations Research,
  Universit{\"a}t Bonn, Bonn, 1984.

\bibitem{chvatal1989cutting}
Va{\v{s}}ek Chv{\'a}tal, William~J. Cook, and Mark Hartmann.
\newblock On cutting-plane proofs in combinatorial optimization.
\newblock {\em Linear algebra and its applications}, 114:455--499, 1989.

\bibitem{clote1992cutting}
Peter Clote.
\newblock Cutting planes and constant depth frege proofs.
\newblock In {\em Proceedings of the Seventh Annual IEEE Symposium on Logic in
  Computer Science}, pages 296--307, 1992.

\bibitem{conforti2014integer}
Michele Conforti, G{\'e}rard Cornu{\'e}jols, and Giacomo Zambelli.
\newblock {\em Integer programming}, volume 271.
\newblock Springer, 2014.

\bibitem{cook1987complexity}
William~J. Cook, Collette~R. Coullard, and Gy~Tur{\'a}n.
\newblock On the complexity of cutting-plane proofs.
\newblock {\em Discrete Applied Mathematics}, 18(1):25--38, 1987.

\bibitem{cook2001matrix}
William~J. Cook and Sanjeeb Dash.
\newblock On the matrix-cut rank of polyhedra.
\newblock {\em Mathematics of Operations Research}, 26(1):19--30, 2001.

\bibitem{cook1990complexity}
William~J. Cook and Mark Hartmann.
\newblock On the complexity of branch and cut methods for the traveling
  salesman problem.
\newblock {\em Polyhedral Combinatorics}, 1:75--82, 1990.

\bibitem{dadush2020complexity}
Daniel Dadush and Samarth Tiwari.
\newblock On the complexity of branching proofs.
\newblock {\em arXiv preprint arXiv:2006.04124}, 2020.

\bibitem{danzer1963helly}
L~Danzer, B~Gr{\"u}nbaum, and V~Klee.
\newblock Helly's theorem and its relatives ({P}roceedings of {S}ymposia in
  {P}ure {M}athematics {VII}).
\newblock {\em American Mathematical Society (Providence, RI)}, pages 101--180,
  1963.

\bibitem{dash2002exponential}
Sanjeeb Dash.
\newblock An exponential lower bound on the length of some classes of
  branch-and-cut proofs.
\newblock In {\em International Conference on Integer Programming and
  Combinatorial Optimization (IPCO)}, pages 145--160. Springer, 2002.

\bibitem{dash2005exponential}
Sanjeeb Dash.
\newblock Exponential lower bounds on the lengths of some classes of
  branch-and-cut proofs.
\newblock {\em Mathematics of Operations Research}, 30(3):678--700, 2005.

\bibitem{dash2010complexity}
Sanjeeb Dash.
\newblock On the complexity of cutting-plane proofs using split cuts.
\newblock {\em Operations Research Letters}, 38(2):109--114, 2010.

\bibitem{de2018chance}
Jes\'us~A De~Loera, Reuben~N. La~Haye, D{\'e}borah Oliveros, and Edgardo
  Rold{\'a}n-Pensado.
\newblock Chance-constrained convex mixed-integer optimization and beyond: Two
  sampling algorithms within s-optimization.
\newblock {\em Journal of Convex Analysis}, 25(1):201--218, 2018.

\bibitem{de2016random}
Jes{\'u}s~A De~Loera, Sonja Petrovi{\'c}, and Despina Stasi.
\newblock Random sampling in computational algebra: Helly numbers and violator
  spaces.
\newblock {\em Journal of Symbolic Computation}, 77:1--15, 2016.

\bibitem{dey2020branch}
Santanu~S. Dey, Yatharth Dubey, and Marco Molinaro.
\newblock Branch-and-bound solves random binary packing {IP}s in polytime.
\newblock {\em arXiv preprint arXiv:2007.15192}, 2020.

\bibitem{dey2021lower}
Santanu~S. Dey, Yatharth Dubey, and Marco Molinaro.
\newblock Lower bounds on the size of general branch-and-bound trees.
\newblock {\em arXiv preprint arXiv:2103.09807}, 2021.

\bibitem{Doignon1973}
J.-P. Doignon.
\newblock Convexity in cristallographical lattices.
\newblock {\em J. Geometry}, 3:71--85, 1973.

\bibitem{eisenbrand2003fast}
Friedrich Eisenbrand.
\newblock Fast integer programming in fixed dimension.
\newblock In {\em European Symposium on Algorithms}, pages 196--207. Springer,
  2003.

\bibitem{eisenbrand2003bounds}
Friedrich Eisenbrand and Andreas~S. Schulz.
\newblock Bounds on the {C}hv{\'a}tal rank of polytopes in the 0/1-cube.
\newblock {\em Combinatorica}, 23(2):245--261, 2003.

\bibitem{fleming2021power}
Noah Fleming, Mika G{\"o}{\"o}s, Russell Impagliazzo, Toniann Pitassi, Robert
  Robere, Li-Yang Tan, and Avi Wigderson.
\newblock On the power and limitations of branch and cut.
\newblock {\em arXiv preprint arXiv:2102.05019}, 2021.

\bibitem{goerdt1990cutting}
Andreas Goerdt.
\newblock Cutting plane versus frege proof systems.
\newblock In {\em International Workshop on Computer Science Logic}, pages
  174--194. Springer, 1990.

\bibitem{goerdt1991cutting}
Andreas Goerdt.
\newblock The cutting plane proof system with bounded degree of falsity.
\newblock In {\em International Workshop on Computer Science Logic}, pages
  119--133. Springer, 1991.

\bibitem{grigoriev2002complexity}
Dima Grigoriev, Edward~A. Hirsch, and Dmitrii~V. Pasechnik.
\newblock Complexity of semi-algebraic proofs.
\newblock In {\em Annual Symposium on Theoretical Aspects of Computer Science
  (STACS)}, pages 419--430. Springer, 2002.

\bibitem{GroetschelLovaszSchrijver-Book88}
Martin Gr{\"o}tschel, L\'aszl\'o Lov{\'a}sz, and Alexander Schrijver.
\newblock {\em Geometric {A}lgorithms and {C}ombinatorial {O}ptimization},
  volume~2 of {\em Algorithms and Combinatorics: Study and Research Texts}.
\newblock Springer-Verlag, Berlin, 1988.

\bibitem{hoffman1979binding}
Alan~J. Hoffman.
\newblock Binding constraints and {H}elly numbers.
\newblock {\em Annals of the New York Academy of Sciences}, 319:284--288, 1979.

\bibitem{impagliazzo1994upper}
Russell Impagliazzo, Toniann Pitassi, and Alasdair Urquhart.
\newblock Upper and lower bounds for tree-like cutting planes proofs.
\newblock In {\em Proceedings Ninth Annual IEEE Symposium on Logic in Computer
  Science}, pages 220--228. IEEE, 1994.

\bibitem{krajivcek1998discretely}
Jan Kraj{\'\i}{\v{c}}ek.
\newblock Discretely ordered modules as a first-order extension of the cutting
  planes proof system.
\newblock {\em The Journal of Symbolic Logic}, 63(4):1582--1596, 1998.

\bibitem{Lenstra83}
Hendrik~W. Lenstra, Jr.
\newblock Integer programming with a fixed number of variables.
\newblock {\em Mathematics of Operations Research}, 8(4):538--548, 1983.

\bibitem{lovasz1986algorithmic}
L{\'a}szl{\'o} Lov{\'a}sz.
\newblock {\em An algorithmic theory of numbers, graphs, and convexity},
  volume~50.
\newblock SIAM, 1986.

\bibitem{pudlak1997lower}
Pavel Pudl{\'a}k.
\newblock Lower bounds for resolution and cutting plane proofs and monotone
  computations.
\newblock {\em The Journal of Symbolic Logic}, 62(3):981--998, 1997.

\bibitem{pudlak1999complexity}
Pavel Pudl{\'a}k.
\newblock On the complexity of the propositional calculus.
\newblock {\em London Mathematical Society Lecture Note Series}, pages
  197--218, 1999.

\bibitem{razborov2017width}
Alexander~A. Razborov.
\newblock On the width of semialgebraic proofs and algorithms.
\newblock {\em Mathematics of Operations Research}, 42(4):1106--1134, 2017.

\bibitem{rothvoss20130}
Thomas Rothvo{\ss} and Laura Sanit\`a.
\newblock 0/1 polytopes with quadratic {C}hv{\'a}tal rank.
\newblock In {\em International Conference on Integer Programming and
  Combinatorial Optimization (IPCO)}, pages 349--361. Springer, 2013.

\bibitem{scarf1977observation}
Herbert~E Scarf.
\newblock An observation on the structure of production sets with
  indivisibilities.
\newblock {\em Proceedings of the National Academy of Sciences},
  74(9):3637--3641, 1977.

\bibitem{sch}
Alexander Schrijver.
\newblock {\em Theory of Linear and Integer Programming}.
\newblock John Wiley and Sons, New York, 1986.

\bibitem{van1993theory}
Marcel~LJ van De~Vel.
\newblock {\em Theory of convex structures}.
\newblock Elsevier, 1993.

\end{thebibliography}

\end{document}